\definecolor{ao}{rgb}{0.0, 0.5, 0.0}
\definecolor{darkblue}{rgb}{0,0,0.7} 
\definecolor{green}{RGB}{57,181,74} 
\definecolor{violet}{RGB}{147,39,143} 
\newcommand{\darkblue}{\color{darkblue}} 
\newcommand{\defn}[1]{{\darkblue \emph{#1}}}
\newcommand{\R}{\mathbb{R}}
\newcommand{\K}{\mathbb{K}}
\newcommand{\LL}{\mathbb{L}}
\newcommand{\sA}{\mathsf{A}}
\newcommand{\sR}{\mathsf{R}}
\newcommand{\eqdef}{\mbox{\,\raisebox{0.2ex}{\scriptsize\ensuremath{\mathrm:}}\ensuremath{=}\,}} 
\newcommand{\hdt}{\text{hdt}}
\newcommand{\trace}{\mathrm{tr}}
\newcommand{\calA}{\mathcal{A}}
\newcommand{\calF}{\mathcal{F}}
\newcommand{\Aut}{\operatorname{Aut}}
\newcommand{\TAut}{\operatorname{TAut}}
\newcommand{\Lie}{\operatorname{Lie}}
\newcommand{\gr}{\operatorname{gr}}
\newcommand{\bch}{\operatorname{bch}}
\newcounter{dummy} \numberwithin{dummy}{section}
 \newtheorem{theorem}[dummy]{Theorem}
\newtheorem{prop}[dummy]{Proposition}
\newtheorem{lemma}[dummy]{Lemma}
\newtheorem{cor}[dummy]{Corollary}
\newtheorem*{thm*}{Theorem}
\newtheorem*{prop*}{Proposition}
\theoremstyle{definition}
\newtheorem{definition}[dummy]{Definition}
\newtheorem{remark}[dummy]{Remark}
\numberwithin{equation}{section}
\newcommand{\grt}{\mathsf{GRT}}
\newcommand{\kv}{\mathsf{KV}}
\newcommand{\krv}{\mathsf{KRV}}
\newcommand{\solkv}{\mathsf{SolKV}}
\newcommand{\tder}{\mathfrak{tder}}
\newcommand{\cyc}{\mathrm{cyc}}
\newcommand{\lkrv}{\mathfrak{krv}}
\newcommand{\wf}{\mathsf{wF}}
\newcommand{\arrows}{\mathsf{A}}
\newcommand{\A}{\arrows}
\title{Kashiwara--Vergne solutions degree by degree}
\author[Z. Dancso]{Zsuzsanna Dancso}
\address{School of Mathematics and Statistics\\ The University of Sydney\\ Sydney, NSW, Australia}
\email{zsuzsanna.dancso@sydney.edu.au}
\author[I. Halacheva]{Iva Halacheva}
\address{Department of Mathematics \\ Northeastern University \\ Boston, Massachusetts, USA}
\email{i.halacheva@northeastern.edu}
\author[G. Laplante-Anfossi]{Guillaume Laplante-Anfossi}
\address{School of Mathematics and Statistics \\ The University of Melbourne \\ Melbourne, Victoria, Australia}
\email{guillaume.laplanteanfossi@unimelb.edu.au}
\author[M. Robertson]{Marcy Robertson}
\address{School of Mathematics and Statistics \\ The University of Melbourne \\ Melbourne, Victoria, Australia}
\email{marcy.robertson@unimelb.edu.au}
\thanks{M.R. and G.L.A. were supported by the Australian Research Council Future Fellowship FT210100256. I.H. was supported by NSF grant DMS-2302664. The second author is also grateful for the support and hospitality of the Sydney Mathematical Research Institute (SMRI)}
\date{\today}
\begin{document}
\maketitle\bibliographystyle{amsalpha}

\begin{abstract} 
We show that solutions to the Kashiwara--Vergne problem can be extended degree by degree. This can be used to simplify the computation of a class of Drinfel'd associators, which under the Alekseev--Torossian conjecture, may comprise all associators. We also give a proof that the associated graded Lie algebra of the Kashiwara--Vergne group is isomorphic to the graded Kashiwara-Vergne Lie algebra.
\end{abstract}

\section{Introduction}
The purpose of this note is to formalise a strategy for computing solutions to the Kashiwara--Vergne (KV) equations one degree at a time.  The original Kashiwara--Vergne problem was posed in the context of convolutions on Lie groups in 1978 \cite{KashiwaraVergne78}, and has wide implications from Lie theory to harmonic analysis. The first general solution was found in 2006, by Alekseev and Meinrenken \cite{AlekseevMeinrenken06}.  Later, Alekseev--Torossian \cite{AlekseevTorossian12}, reformulated the original problem to show that a KV solution is an automorphism of the degree completed free Lie algebra on two generators, satisfying two equations (see \cref{def:SolKV}).

In a series of papers \cite{AlekseevEnriquezTorossian10,AlekseevTorossian12}, Alekseev, Enriquez, and Torossian showed that, in fact, the KV equations have deep connections to deformation quantization.
In particular, the combination of \cite[Thm.~9.6]{AlekseevTorossian12} and \cite[Thm.~4 \&~5]{AlekseevEnriquezTorossian10} demonstrates that solutions to the KV equations can be explicitly constructed from Drinfel'd associators.
More recently, KV solutions were shown to be in bijection with certain invariants -- {\em formality isomorphisms} or {\em homomorphic expansions} -- in low-dimensional topology, unravelling unexpected connections with knot theory and string topology (cf. \cite{AlekseevKawazumiKunoNaef17}, \cite{Bar-NatanDancso:WKO2}).

Specifically, KV solutions are in bijection with homomorphic expansions for a class of knotted surfaces in $\mathbb{R}^4$ \cite[Thm.~4.9]{Bar-NatanDancso:WKO2}. A less direct correspondence with homomorphic expansions of the Goldman--Turaev Lie bialgebra of curves on surfaces can be used to define higher genus Kashiwara--Vergne problems
\cite{AlekseevKawazumiKunoNaef17,AlekseevKawazumiKunoNaef20}.

This paper studies the set $\solkv^{(n)}$ of solutions which satisfy the KV equations {\em up to degree $n$}. These sets of ``limited'' KV solutions are equipped with natural truncation maps $\solkv^{(n)} \to \solkv^{(n-1)}$. The main result of this note is the following theorem:

\begin{thm*}[{\cref{thm:main}}]
The truncation maps $\solkv^{(n)} \to \solkv^{(n-1)}$ are surjections, and the set of KV solutions, $\solkv$, admits a tower decomposition  \[\cdots \rightarrow \solkv^{(n+1)}\rightarrow\solkv^{(n)}\rightarrow \solkv^{(n-1)}\rightarrow\cdots\] 
\end{thm*}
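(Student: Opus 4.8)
The second assertion follows from the first, once one checks that $\solkv$ is the inverse limit of the tower $\{\solkv^{(n)}\}$: a compatible family $(F^{(n)})_n$ assembles into a single tangential automorphism $F \in \TAut_2$; the equation $F(x+y) = \bch(x,y)$ holds since it holds modulo every degree; the power series $r^{(n)}$ witnessing the second KV equation in degree $n$ are forced to agree, since $r \mapsto \trace\big(r(x)+r(y)-r(\bch(x,y))\big)$ is injective, hence assemble into an $r \in z^2\C[[z]]$ with $j(F) = \trace\big(r(x)+r(y)-r(\bch(x,y))\big)$; and two global solutions agreeing in all degrees coincide. So the content of the theorem is the surjectivity of the truncation maps $\solkv^{(n)} \to \solkv^{(n-1)}$, which I would obtain from the torsor structure of $\solkv$.

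Recall that $\solkv$ is a torsor under the Kashiwara--Vergne group $\krv$ (Alekseev--Torossian). The relevant computation works modulo degree $n$: if $F, F'$ are solutions mod degree $n$, then $F^{-1} \circ F'$ fixes $x+y$ mod degree $n$ -- because $F(x+y) = \bch = F'(x+y)$ forces $F^{-1}(\bch) = x+y$ -- and its divergence cocycle value lies in $L := \{\trace(\phi(x)+\phi(y)-\phi(x+y)) : \phi \in z^2\C[[z]]\}$, where one uses cyclic invariance of the trace together with $F^{-1}(\bch) = x+y$ to rewrite $F^{-1} \cdot \trace\big(\phi(x)+\phi(y)-\phi(\bch)\big)$ as $\trace\big(\phi(x)+\phi(y)-\phi(x+y)\big)$. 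Hence $\solkv^{(n)}$, provided it is nonempty, is a torsor under the degree-$\le n$ truncation $\krv^{(n)}$ of $\krv$, compatibly with the truncation maps of the two towers; and $\solkv^{(n)}$ is nonempty because a global solution exists \cite{AlekseevMeinrenken06} and truncates into it. With this in hand, surjectivity of $\solkv^{(n)} \to \solkv^{(n-1)}$ reduces to surjectivity of $\krv^{(n)} \to \krv^{(n-1)}$: given $\bar G \in \solkv^{(n-1)}$, pick any $H \in \solkv^{(n)}$ with truncation $\bar H$, write $\bar G = \bar H \circ \bar k$ with $\bar k \in \krv^{(n-1)}$, lift $\bar k$ to $k \in \krv^{(n)}$, and observe that $H \circ k$ truncates to $\bar G$.

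What is left is the surjectivity of $\krv^{(n)} \to \krv^{(n-1)}$, which is the content of the companion statement $\gr \krv \cong \lkrv$. The two defining conditions of $\krv$ -- fixing $x+y$, and having divergence cocycle value in $L$ -- cut out a subgroup, the key point being that $L$ is stable under the action of any automorphism fixing $x+y$ (again cyclic invariance of the trace), so the cocycle identity keeps the value in $L$ under products. Reading off the linearized conditions identifies the Lie algebra of this subgroup with the graded Kashiwara--Vergne Lie algebra $\lkrv$; as $\krv$ is pro-unipotent this gives $\krv = \exp(\lkrv)$, and since $\lkrv$ is graded, $\gr \krv \cong \lkrv$ -- in particular the truncations $\krv^{(n)} \to \krv^{(n-1)}$ are surjective. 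Assembling the pieces gives surjectivity of all the maps $\solkv^{(n)} \to \solkv^{(n-1)}$, hence the tower decomposition. The step requiring the most care -- and the reason the isomorphism $\gr\krv \cong \lkrv$ is worth isolating -- is making this robust to the technicalities of the divergence cocycle: the well-definedness of divergence, the pro-unipotent structure of $\TAut_2$ and $\krv$, and the discrepancy between the $\bch(x,y)$ in the second KV equation for solutions and the $x+y$ in the one for $\krv$. Once these are dealt with, the argument above goes through unchanged.
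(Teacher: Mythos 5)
Your overall strategy is the same as the paper's: reduce the surjectivity of $\solkv^{(n)}\to\solkv^{(n-1)}$ to the surjectivity of $\krv^{(n)}\to\krv^{(n-1)}$ via the torsor structure of $\solkv^{(n)}$ under $\krv^{(n)}$ together with the existence of a global solution, and then handle the $\krv$ tower at the Lie algebra level. Your first two paragraphs match \cref{rmk:towers}, \cref{prop: torosor up to n} and the proof of \cref{thm:main} essentially step for step (modulo the opposite sign convention for the KV equations).

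The one place where your logic is inverted is the last step. For the torsor argument of your second paragraph to apply, $\krv^{(n)}$ must be the group of elements of $\TAut_{\leq n}$ satisfying the \emph{truncated} KRV equations, not the image $\pi_n(\krv)$ of the global group. With that definition, the deduction ``$\gr\krv\cong\lkrv$, in particular the truncations $\krv^{(n)}\to\krv^{(n-1)}$ are surjective'' is circular: it silently assumes $\krv^{(n)}=\pi_n(\krv)$, i.e.\ that every truncated solution of the KRV equations lifts to a global one, which is precisely the $\krv$-analogue of the statement being proved. (In the paper the implication runs the other way: the identification $\krv^{(n)}\cong\krv/\calF_{n+1}(\krv)$ is a \emph{consequence} of the surjectivity lemma.) The correct argument, most of whose ingredients you do mention, is: (a) $\lkrv^{(n)}\to\lkrv^{(n-1)}$ is surjective because the defining equations of $\lkrv$ are homogeneous, so a truncated solution extends by zero in the new degree --- this is your ``since $\lkrv$ is graded'', and is \cref{lem:krv-surj}; and (b) $\exp\colon\lkrv^{(n)}\to\krv^{(n)}$ is a bijection, so Lie-algebra surjectivity transfers to the truncated groups (\cref{lem:exp,lem:reduction2}). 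Step (b) is where the paper does genuine work: it realises $\krv^{(n)}$ as a group of filtered automorphisms of the truncated circuit algebra of arrow diagrams in order to show it is a unipotent affine algebraic group, whence $\exp$ is an isomorphism of varieties (\cref{l:krv-unipotent}). You flag ``the pro-unipotent structure'' as the step needing the most care, but an argument of this kind is actually required and cannot be absorbed into the assertion $\gr\krv\cong\lkrv$.
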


A key idea to the proof is to study KV solutions via their symmetries, governed by the Kashiwara--Vergne group $\kv$ (on the left) and its graded version $\krv$ (on the right). We define a tower decomposition \[\cdots\rightarrow\krv^{(n+1)}\rightarrow \krv^{(n)}\rightarrow \krv^{(n-1)}\rightarrow \cdots\] of $\krv$, and show that each map in the tower is a surjection (\cref{lem:reduction2,lem:krv-surj}).
This follows the strategy of \cite[Thm.~5]{Bar-Natan98}, which is set in the context of Drinfel'd associators and the graded Grothendieck--Teichm\"uller group $\grt_1$. 
In fact, the proof is easier in the KV context. After passing to Lie algebras, the defining ``semi-classical hexagon'' equation for the graded Grothendieck--Teichm\"uller Lie algebra $\mathfrak{grt}_1$ in degree $n$ depends on the degree $(n-1)$ terms of the solution. 
This complexity is circumvented by showing that the semi-classical hexagon equation can be deduced from the other defining equations. 
Similar issues do not arise in the case of the defining equations of the graded KV Lie algebra $\lkrv$.

\medskip

Another important ingredient in our proof -- which enables the reduction of the problem from groups to Lie algebras  -- is the fact that the group $\krv$ can be identified with the automorphism group of the filtered, completed linear tensor category $\calA$ of arrow diagrams \cite[Thm.~5.12]{DancsoHalachevaRobertson23}.
This is part of a wider topological perspective where the Kashiwara--Vergne group $\kv$ is identified with the automorphism group of the filtered, completed tensor category of welded foams $\widehat{\wf}$ \cite[Sec.~5.2]{DancsoHalachevaRobertson23}, and where KV solutions are identified with filtered, structure preserving {\em isomorphisms} $\widehat{\wf} \rightarrow \A$ \cite[Thm.~4.9]{DancsoHalachevaRobertson23}.  
Remarkably, the filtration considered in this paper coincides with the one induced by the Vassiliev filtration
of welded foams, see \cref{rem:Vassiliev-filtration}.

The bitorsor structure on KV solutions allows one to transfer the surjectivity result from the tower decomposition of $\krv$ to the similar tower decomposition of the Kashiwara--Vergne group $\kv$: 
\[\cdots\rightarrow\kv^{(n+1)}\rightarrow \kv^{(n)}\rightarrow \kv^{(n-1)}\rightarrow\cdots \]  
 
We show that this tower decomposition induces a descending filtration of the group $\kv$ which satisfies a commutator condition (\cref{lem:commutator}); this, in turn, implies that the associated graded of $\kv$ admits a Lie bracket.
In fact, the commutator condition holds at the level of tangential automorphisms, and provides an elementary proof of the following theorem: 

\begin{thm*}[{\cref{thm:krv-lie-of-kv}}]
The associated graded Lie algebra of $\kv$ is isomorphic to the underlying Lie algebra of $\krv$. 
\end{thm*}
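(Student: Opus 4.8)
The plan is to realize the isomorphism via a single Kashiwara--Vergne solution, using the $(\krv,\kv)$-bitorsor structure on $\solkv$ recalled in the introduction \cite[Thm.~4.9, Sec.~5.2]{DancsoHalachevaRobertson23} together with the commutator estimate of \cref{lem:commutator}. First fix any $Z\in\solkv$; such a $Z$ exists by Alekseev--Meinrenken \cite{AlekseevMeinrenken06} (or may be built by iterating the surjective truncation maps of \cref{thm:main}). Since $\krv$ acts transitively on $\solkv$ by post-composition and $\kv$ acts transitively by pre-composition, for each $g\in\kv$ there is a unique $h\in\krv$ with $h\circ Z=Z\circ g$; setting $\phi_Z(g)\eqdef h=Z\circ g\circ Z^{-1}$ defines a group homomorphism $\phi_Z\colon\kv\to\krv$, whose inverse is the analogous conjugation by $Z^{-1}$ (using transitivity of the $\kv$-action). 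Hence $\phi_Z\colon\kv\xrightarrow{\ \sim\ }\krv$ is a group isomorphism.

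Next I would check that $\phi_Z$ respects the two tower decompositions, hence the induced descending filtrations $\kv=\kv_1\supseteq\kv_2\supseteq\cdots$ and $\krv=\krv_1\supseteq\krv_2\supseteq\cdots$. Because $Z$ is a homomorphic expansion, both $Z$ and $Z^{-1}$ are filtration-preserving and induce the canonical identification on associated gradeds; consequently $\gr(Z\circ g\circ Z^{-1})=\gr(g)$, so $\phi_Z$ carries an automorphism that is trivial modulo degree $n$ to another such automorphism, giving $\phi_Z(\kv_n)=\krv_n$ for all $n$. By \cref{lem:commutator}, which holds already inside the group of tangential automorphisms, these filtrations satisfy $[\kv_m,\kv_n]\subseteq\kv_{m+n}$ and $[\krv_m,\krv_n]\subseteq\krv_{m+n}$, so each associated graded is a graded Lie algebra under the commutator bracket (sitting inside $\tder_2$), and $\gr\phi_Z\colon\gr\kv\xrightarrow{\ \sim\ }\gr\krv$ is an isomorphism of graded Lie algebras. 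Finally $\krv$ is the prounipotent group attached to the graded, pronilpotent Lie algebra $\lkrv$, so the logarithm identifies $\gr\krv$ with $\lkrv$ as graded Lie algebras; composing with $\gr\phi_Z$ gives $\gr\kv\cong\lkrv$, as claimed.

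I expect the main obstacle to be the compatibility statement in the second step: verifying that $\phi_Z$ truly preserves the degree --- equivalently Vassiliev (\cref{rem:Vassiliev-filtration}) --- filtration and induces the identity on $\tder_2$. This is precisely where the defining property of a homomorphic expansion (filtered, identity on associated graded) enters, and it is also what makes \cref{lem:commutator} \emph{at the level of tangential automorphisms} the right tool: it computes the bracket on $\gr\kv$ intrinsically inside $\tder_2$, independently of the noncanonical choice of $Z$, so that $\gr\phi_Z$ is automatically a morphism of Lie algebras rather than merely of graded vector spaces. The remaining point --- that the commutator bracket on $\gr\krv$ matches, under the logarithm, the Lie bracket of $\lkrv$ --- is routine for a graded prounipotent group. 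One could instead avoid the bitorsor entirely, showing directly that the leading-degree term of an element of $\kv_n$ solves the degree-$n$ part of the linearized KV equations (which are exactly the defining relations of $\lkrv$) and invoking \cref{thm:main} for surjectivity; the bitorsor argument simply repackages this.
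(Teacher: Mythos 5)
Your proposal is correct and follows essentially the same route as the paper: transfer from $\kv$ to $\krv$ via the bitorsor structure on $\solkv$, use \cref{lem:commutator} to equip the associated gradeds with compatible Lie brackets, and identify $\gr(\krv)$ with $\lkrv$. The only cosmetic difference is that you conjugate by a single global solution $Z$, whereas the paper packages the same isomorphisms level by level through the truncated groups $\kv^{(n)}$ and $\krv^{(n)}$ (\cref{thm:towers}); note also that the cleanest justification that $\gr\phi_Z$ is the identity on associated gradeds is \cref{lem:commutator} itself with $m=1$ (every tangential automorphism lies in $\calF_1(\TAut)$), rather than an appeal to the homomorphic-expansion property of $Z$.
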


This was briefly observed in \cite[below Prop.~8]{AlekseevEnriquezTorossian10}, but we include a proof here because we feel it illuminates the origins of the filtration on~$\kv$. 

\medskip

Computationally, our main result implies that degree by degree calculations of KV solutions always succeed (i.e. they never ``get stuck''). 
Moreover, via the construction of \cite{AlekseevEnriquezTorossian10}, one can study Drinfel'd associators using this tower decomposition of KV solutions.
Indeed, it is known that Drinfel'd associators
are extendable degree by degree (\cite[Prop.~5.8]{Drinfeld90}, \cite[Thm.~4]{Bar-Natan98}, \cite[Prop.~10.4.9]{Fresse1}). 
However, the KV equations are stated in significantly smaller spaces than the pentagon and hexagon equations which define Drinfel'd associators, and hence degree by degree extension is less computationally demanding on the KV side than on the associator side.
Thus, {\em symmetric} KV solutions -- those which arise from Drinfel'd associators via the Alekseev--Enriquez--Torossian construction \cite{AlekseevEnriquezTorossian10} -- could be exploited to obtain explicit Drinfel'd associators to a high degree. 
Alekseev and Torossian \cite[after Prop.~4.10]{AlekseevTorossian12} conjecture that all KV solutions arise from associators.
This conjecture was verified at least to degree 16, hence \cref{thm:main} has wide computational applicability.

\subsection*{Acknowledgements} We thank Dror Bar-Natan for asking the question that grew into this note. We are grateful to Christian Haesemeyer, Peter McNamara and Arun Ram for fruitful discussions. 


\section{Preliminaries: the Kashiwara-Vergne Theorem}

Throughout the paper we work over a field $\K$ of characteristic zero, and write $\Lie(x,y)$ to denote the free Lie algebra on two generators over the $\K$, and  denote by $\Lie(x,y)_n$ its degree $n$ part. 
Here, a Lie word is said to be \defn{of degree $n$} if it consists of $n$ letters, e.g. $[[y, x], y]$ is a Lie word of degree three. We write $$\LL\eqdef\Pi_{n=1}^\infty \Lie(x,y)_n = \Pi_{n=1}^\infty \LL_n$$ for the degree completion of $\Lie(x,y)$.  

The universal enveloping algebra of $\LL$, can be identified -- as a completed Hopf algebra -- with the degree-completed free associative algebra generated by $x$ and $y$, denoted $\sA$. 
In this way, elements $u\in \LL$ can be viewed as elements of $\sA$, by expanding brackets as algebra commutators. As with $\LL$, the degree of a word in $\sA$ is the number of letters in the word. 

The completed graded vector space of \defn{cyclic words} in $x$ and $y$ is the {\em linear} quotient 
\[\cyc\eqdef\sA/[\sA,\sA].\]  
Here $[\sA,\sA]$ denotes the subspace of $\sA$ spanned by elements of the form $ab-ba$ for $a, b \in \sA$.  
There is a natural trace map $\trace: \sA \to \cyc$.   

A \defn{tangential derivation} of $\LL$ is a Lie derivation $u: \LL \to \LL$ for which $u(x)=[x,u_1]$ and $u(y)=[y,u_2]$ for some $u_1, u_2 \in \LL$. The commutator of tangential derivations is always a tangential derivation. Under this operation, tangential derivations of $\LL$ form a degree-completed Lie algebra, which we denote by $\tder$. There is a natural linear (not a Lie algebra) map $\LL^{\oplus 2} \to \tder$, given by $(u_1,u_2)\mapsto u$, with kernel~$\K x~\oplus~\K y$. As such, we write tangential derivations as pairs of Lie words, writing $u=(u_1,u_2)$. Homogeneous degree $n$ elements of $\tder$ are of the form $u=(u_1,u_2)$, with $u_1$ and $u_2$ both of degree $n$. The set of tangential derivations of degree $n$ is denoted $\tder_n$. 
 
The action of $\tder$ on $\LL$ extends via the product rule to a natural action $\cdot$ on $\sA$, which further descends to a natural action of $\tder$ on $\cyc$. The exponentiation of the Lie algebra $\tder$ is identified with the group of \defn{tangential automorphisms} of $\LL$, which we denote by $\TAut$. 
These are Lie automorphisms~$F:~\LL~\to~\LL$, for which $F(x)=e^{-u_1}x e^{u_1}$, and $F(y)=e^{-u_2}ye^{u_2}$, for some $(u_1, u_2) \in \tder$. 
As with $\tder$, we often write $F=(e^{u_1},e^{u_2})$. 
Note that as the linear map $\LL^{\oplus 2}\to \tder$ is not a Lie homomorphism, also for $u=(u_1,u_2)$, $e^u\neq (e^{u_1},e^{u_2})$.
The group law in $\TAut$ is defined via the $\bch$ product: for $u,v\in \tder$, by definition $e^ue^v \eqdef e^{\bch(u,v)}$, where $$\bch(u,v)=\log(e^ue^v)=u+v+\frac{1}{2}[u,v]+\cdots \in \tder$$ is the Baker-Campbell-Hausdorff Lie series. 
The action of $\tder$ on $\sA$ and $\cyc$ lifts via exponentiation to actions $\cdot$ of $\TAut$ on $\sA$ and $\cyc$.

Next, we need to recall the definitions of the divergence and Jacobian maps. Note that each element $a\in \sA$ has a unique decomposition of the form $$a=a_0+\partial_x(a)x+\partial_y(a)y$$ for some $a_0\in\K$ and $\partial_x(a),\partial_y(a)\in\sA$. In practice, $\partial_x$ picks out the words of a sum which end in $x$ and deletes their last letter $x$, and deletes all other words. 

The non-commutative \defn{divergence} map $j:\tder \rightarrow \cyc$ is the linear map defined by the following formula, for a tangential derivation $u=(u_1,u_2)$: $$j(u)\eqdef\trace( \partial_x(u_1)x+\partial_y(u_2)y) \ . $$  The divergence map is a $1$-cocycle of the Lie algebra $\tder$ with respect to the action of $\tder$ on $\cyc$: for any $u,v \in \tder$, we have $j([u,v])=u\cdot j(v)-v\cdot j(u)$ (\cite[Prop.~3.20]{AlekseevTorossian12}).  

Integrating the divergence cocycle produces the non-commutative \defn{Jacobian}: 
$$J:~\TAut~\rightarrow~\cyc,$$ 
an additive group $1$-cocycle \cite[Eq.~16]{AlekseevTorossian12}. That is, for $F,G\in\TAut$, we have $$J(FG)=J(F)+F\cdot J(G).$$
The Jacobian $J$ is uniquely determined by the two conditions  
\[J(1)=0 \quad \text{and} \quad \frac{d}{dt}\Big|_{t=0}J(e^{tu}F)=j(u)+u\cdot J(F)\] for any $F\in \TAut$ and $u\in\tder$. 

\begin{remark}
Our notation $(j,J)$ matches that of \cite{AlekseevEnriquezTorossian10,DancsoHalachevaRobertson23}, and corresponds to $(\mathsf{div},j)$ in \cite{AlekseevTorossian12} and \cite{Bar-NatanDancso:WKO2}. 
\end{remark}

\subsection{Kashiwara--Vergne solutions}\label{def:SolKV}
A \defn{Kashiwara--Vergne solution}, or KV solution for short, is a pair 
 \[ (F,r) \in \TAut \times z^2\K[[z]] \] satisfying the two equations
\begin{gather}
        F(e^xe^y)  =  e^{x+y} \tag{SolKV1}\label{eq:SolKV1}   \\
        J(F)  =  \trace(r(x+y)-r(x)-r(y)). \tag{SolKV2}\label{eq:SolKV2}
\end{gather} We denote the set of KV solutions by $\solkv$.  In any pair $(F,r)$, the tangential automorphism $F$ uniquely determines the power series $r$, and the assignment $F \mapsto r$ is called the \defn{Duflo map} (see \cite{AlekseevEnriquezTorossian10}, before Prop.~6). Hence, we often refer to a KV solution by the tangential automorphism $F$ only.

\begin{remark}
    This format of the KV equations matches the notation in \cite{AlekseevEnriquezTorossian10,DancsoHalachevaRobertson23}. However, KV solutions in \cite{AlekseevTorossian12,Bar-NatanDancso:WKO2} consist of the inverses of this version of $\solkv$. 
\end{remark}

\subsection{The Kashiwara--Vergne symmetry groups}
The set $\solkv$ is a bi-torsor under the respective left/right free and transitive actions of the Kashiwara-Vergne symmetry groups, $\kv$ and $\krv$. 

The \defn{graded Kashiwara--Vergne} group, $\krv:=\exp(\lkrv)$, as a set, consists of pairs 
 $(F,r) \in \TAut \times z^2\K[[z]]$ satisfying the equations
\begin{gather}
    F(e^{x+y}) =  e^{x+y} \tag{KRV1}; \label{eq:KRV1} \\
 J(F)  = \trace(r(x+y)-r(x)-r(y)) \tag{KRV2} \label{eq:KRV2} \ . 
\end{gather} 
Once again, $F$ uniquely determines $r$, hence $\krv$ is viewed as a subgroup of $\TAut$. 
The group $\krv$ acts freely and transitively on the right of $\solkv$ by left multiplication by the inverse \cite[Thm.~5.7]{AlekseevTorossian12}: for $G \in \krv$ and $F \in \solkv$, $F \cdot G \eqdef G^{-1}F$.

The linearisation of $\krv$ is the Lie subalgebra $\lkrv$ of $\tder$, called the \defn{graded Kashiwara-Vergne Lie algebra}, which consists of pairs $(u,r) \in \tder \times \K[[z]]$ satisfying the equations
\begin{gather}
    u(x+y) =  0 \tag{krv1} \label{eq:Lie1}\\
    j(u) = \trace(r(x+y)-r(x)-r(y)) \ . \tag{krv2} \label{eq:Lie2}
\end{gather}
This is viewed as a Lie subalgebra of $\tder$, as $u$ uniquely determines $r$ \cite[Prop.~4.5]{AlekseevTorossian12}. The Lie algebra $\lkrv$ is infinite dimensional (\cite[Thm.~4.6]{AlekseevTorossian12}), but positively graded with finite dimensional graded components, and $\exp(\lkrv)=\krv$.

\medskip

Similarly, the left symmetry group of $\solkv$ is the \defn{Kashiwara--Vergne group}, denoted $\kv$. 
As a set, $\kv$ consists of the pairs
 \[ (F,r) \in \TAut \times z^2\K[[z]] \] satisfying the equations
    \begin{gather}
        F(e^xe^y) =  e^xe^y \tag{KV1} \label{eq:KV1}\\
     J(F)  = \trace(r(\bch(x,y))-r(x)-r(y)) \tag{KV2} \label{eq:KV2} \ ,
    \end{gather} where $\bch(x,y)$ denotes the Baker--Campbell--Hausdorff series.  As with $\krv$, the group $\kv$ is a subgroup  of $\TAut$ \cite[Prop.~8]{AlekseevEnriquezTorossian10}, as $F$ uniquely determines $r$. The group $\kv$ acts freely and transitively on the left of $\solkv$ by right composition with the inverse in $\TAut$, i.e. for $G \in \kv$ and $F \in \solkv$, we have $G \cdot F \eqdef F G^{-1}$. In summary:

\begin{theorem}[{\cite[Thm.~5.7]{AlekseevTorossian12}, \cite[Prop.~8]{AlekseevEnriquezTorossian10}}]
\label{thm: action is free and transititve}
    The groups $\kv$ and $\krv$ act freely and transitively on the set $\solkv$ of KV solutions, on the left and right, respectively, and these actions commute.
\end{theorem}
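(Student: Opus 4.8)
The plan is to reduce the statement to a purely algebraic fact about the bitorsor $\solkv$ and its two symmetry groups, without invoking the full strength of the Alekseev–Torossian machinery. First I would fix the statement precisely: given $F_0 \in \solkv$ (at least one exists, e.g.\ one built from an associator), the orbit maps $G \mapsto G \cdot F_0$ from $\kv$ and $F_0 \cdot G'$ from $\krv$ are to be shown bijective, and the two actions commute. The commuting of the actions is immediate from associativity in $\TAut$: the left action is $G \cdot F = F G^{-1}$ and the right action is $F \cdot G' = (G')^{-1} F$, so $(G \cdot F) \cdot G' = (G')^{-1} F G^{-1} = G \cdot (F \cdot G')$, using only associativity of composition in $\TAut$. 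So the heart of the matter is freeness and transitivity of each action separately.

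For freeness of the $\kv$-action: if $G \cdot F_0 = F_0$ then $F_0 G^{-1} = F_0$, hence $G = 1$ because $\TAut$ is a group. Freeness of the $\krv$-action is identical. Transitivity is the substantive part. The approach is: (i) verify that the actions are well defined, i.e.\ that $\kv$ does send $\solkv$ into $\solkv$ and similarly for $\krv$. For $\kv$ acting on the left: if $(F,r) \in \solkv$ and $(G,s) \in \kv$, one checks that $FG^{-1}$ satisfies \eqref{eq:SolKV1} and \eqref{eq:SolKV2}. The first equation follows because $G$ fixes $e^x e^y$ (equation \eqref{eq:KV1}), so $(FG^{-1})(e^x e^y) = F(e^x e^y) = e^{x+y}$. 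The second follows from the cocycle property $J(FG^{-1}) = J(F) + F \cdot J(G^{-1})$ together with \eqref{eq:KV2} and a computation relating $F \cdot \trace(\cdots)$ to the required trace expression, using that $F(e^x e^y) = e^{x+y}$ forces the relevant cyclic-word identities. The $\krv$ side is analogous using \eqref{eq:KRV1}, \eqref{eq:KRV2}. (ii) For transitivity, given $F_0, F_1 \in \solkv$, set $G \eqdef F_1^{-1} F_0 \in \TAut$; one must show $G \in \kv$, i.e.\ that $G$ satisfies \eqref{eq:KV1} and \eqref{eq:KV2}. Equation \eqref{eq:KV1}: $G(e^x e^y) = F_1^{-1}(F_0(e^x e^y)) = F_1^{-1}(e^{x+y})$; and since $F_1(e^x e^y) = e^{x+y}$ one gets $F_1^{-1}(e^{x+y}) = e^x e^y$, so \eqref{eq:KV1} holds. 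Equation \eqref{eq:KV2} is extracted from the two instances of \eqref{eq:SolKV2} for $F_0$ and $F_1$ via the Jacobian cocycle identity, producing $J(G) = J(F_1^{-1}F_0)$ in the required form with $r_G$ the appropriate combination; crucially one uses that $F_1^{-1}$ transforms $\trace(r(x+y) - r(x) - r(y))$ according to its action on $\cyc$ and the relation $F_1(e^x e^y) = e^{x+y}$.

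The main obstacle I anticipate is the bookkeeping in step (ii) for \eqref{eq:KV2}: one has $J(F_0) = J(F_1 G) = J(F_1) + F_1 \cdot J(G)$, so $F_1 \cdot J(G) = J(F_0) - J(F_1) = \trace\big(r_0(x+y) - r_0(x) - r_0(y)\big) - \trace\big(r_1(x+y) - r_1(x) - r_1(y)\big)$, and one must invert the action of $F_1$ on $\cyc$ and verify that the result is of the form $\trace(s(\bch(x,y)) - s(x) - s(y))$ rather than merely $\trace(s(x+y) - s(x) - s(y))$ — the switch from $x+y$ to $\bch(x,y)$ between the $\solkv$ and $\kv$ normalisations is exactly where the condition $F_1(e^x e^y) = e^{x+y}$ (equivalently $F_1^{-1}(e^{x+y}) = e^x e^y$) does the work, since applying $F_1^{-1}$ to $x+y$-type traces converts them to $\bch(x,y)$-type traces. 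Since this theorem is quoted verbatim from \cite[Thm.~5.7]{AlekseevTorossian12} and \cite[Prop.~8]{AlekseevEnriquezTorossian10}, I would present this as a sketch and cite those sources for the detailed cyclic-word computations, rather than reproving everything; the point of including it here is to make the bitorsor structure explicit, as it is the scaffolding for the degree-by-degree tower arguments that follow.
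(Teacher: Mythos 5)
Your sketch is correct and matches the standard argument that the paper itself only cites (and reuses almost verbatim in the proof of \cref{prop: torosor up to n} for the up-to-degree-$n$ analogue): well-definedness and transitivity by direct check of \eqref{eq:KV1}/\eqref{eq:KRV1} plus the Jacobian cocycle, with the passage between $x+y$ and $\bch(x,y)$ traces governed exactly by $F(e^xe^y)=e^{x+y}$ as you identify. The only cosmetic point is that for the left $\kv$-action $G\cdot F_1=F_1G^{-1}$ the element carrying $F_1$ to $F_0$ is $G=F_0^{-1}F_1$ rather than $F_1^{-1}F_0$, which is immaterial since $\kv$ is closed under inverses.
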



\section{Kashiwara--Vergne solutions degree by degree}

For each $n \geq 1$, we denote by $\LL_{\leq n}\eqdef \LL / \LL_{\geq n+1}$ the quotient of the Lie algebra $\LL$ by the ideal of elements of degree greater than $n$. Similarly, we define the degree $n$ quotients of the graded algebra $A$ and graded vector space $\cyc$, namely, $A_{\leq n}:= A/A_{\geq n+1}$, and $\cyc_{\leq n}:= \cyc/\cyc_{\geq n+1}$.

We define the degree $n$ quotient of the Lie algebra $\tder$ by setting \[\tder_{\leq n}\eqdef\tder(\LL_{\leq n}).\] 
Elements of $\tder_{\leq n}$ are Lie derivations $u : \LL_{\leq 
n} \to \LL_{\leq n}$ which act on basis vectors by \[ u(x)=[x,u_1] \quad \text{and} \quad u(y)=[y,u_2],\] where $u_1$ and $u_2$ are Lie words in $\LL_{\leq n}$. 
The commutator $uv-vu$ defines a Lie bracket on $\tder_{\leq n}$. Tangential derivations on $\LL$ descend naturally to tangential derivations of $\LL_{\leq n}$, this induces a natural surjective map of Lie algebras $\pi_n: \tder \to \tder_{\leq n}$, hence the name ``quotient''.
Note that, for $u=(u_1,u_2)\in\tder$,  $\pi_n(u)=0$ if and only if either $u=0$ in $\tder$, or the lowest degree terms of both $u_i$, $i=1,2$, are in degree $n+1$ or above. 
As before, the action of $\tder_{\leq n}$ on $\LL_{\leq n}$ extends to $A_{\leq n}$ and $\cyc_{\leq n}$.

The degree $n$ quotient of the group $\TAut$ is defined to be \[\TAut_{\leq n}\eqdef\TAut(\LL_{\leq n}).\]  Elements of $\TAut_{\leq n}$ are Lie automorphisms, $F: \LL_{\leq n} \to \LL_{\leq n}$, which act on basis elements via \[F(x)=e^{-f_1} x e^{f_1} \quad \text{and}\quad F(y)=e^{-f_2}ye^{f_2}\] for some $(e^{f_1},e^{f_2}) \in \exp(\LL)^{\oplus 2}_{\leq n}$. As for $\tder$, there is a natural surjective group homomorphism $\pi_n : \TAut \to \TAut_{\leq n}$. 

The divergence map $j: \tder \to \cyc$ is homogeneous, hence it passes to the degree $n$ quotients of $\tder$ and $\cyc$. In other words, there is a linear divergence map $j: \tder_{\leq n} \to \cyc_{\leq n}$ so that the following diagram commutes:
\begin{center}
    \begin{tikzcd}
    \tder \arrow[d, two heads,"\pi_n"] \arrow[r, "j"] & \cyc \arrow[d, two heads,"\pi_n"]\\
    \tder_{\leq n} \arrow[r, "j"]  & \cyc_{\leq n} 
    \end{tikzcd}
    \end{center}
The original 1-cocycle property of $j$ implies that the same is true on the finite degree level: for any $u,v \in \tder_{\leq n}$, $j([u,v])=u\cdot j(v)-v\cdot j(u)$.
Similarly, the non-commutative Jacobian induces a map $J: \TAut_{\leq n} \to \cyc_{\leq n}$, with the 1-cocycle property $J(FG)=J(F)+F\cdot J(G),$ for any $F,G \in \TAut_{\leq n}$.

Using the fact that $\solkv$ and $\krv$ are subsets of $\TAut$, we define degree $n$ quotients of $\solkv$ and $\krv$ as follows. 

\begin{definition}  
The restrictions of the projection maps $\pi_n : \TAut \to \TAut_{\leq n}$ to $\solkv$, respectively $\krv$, define the degree $n$ quotients
    \[ \solkv_{\leq n}\eqdef\pi_n(\solkv) \quad \text{ and } \quad \krv_{\leq n}\eqdef\pi_n(\krv). \] 
\end{definition}

\subsection{Kashiwara--Vergne solutions to finite degree} 
We now define the key object of study for this paper: KV solutions \emph{up to degree~$n$}. 

\begin{definition}
\label{def: kv solutions up to n} 
We say that $F\in\TAut_{\leq n}$ satisfies the first KV equation (\ref{eq:SolKV1}) \defn{up to degree~$n$} if it satisfies (\ref{eq:SolKV1}) in $\LL_{\leq n}$, that is: 
$$F(e^xe^y) =  e^{x+y} \quad \text{ in } \LL_{\leq n}.$$
Similarly, $F$ satisfies the second KV equation \defn{up to degree~$n$} if there exists a polynomial $r\in z^2\K[[z]]/z^{n+1}$ so that $J(F)$ satisfies the equation (\ref{eq:SolKV2}) in $\cyc_{\leq n}$, that is:
$$J(F)  = \trace(r(x+y)-r(x)-r(y))\quad \text{ in } \cyc_{\leq n}.$$
We denote the set of KV solutions (of both equations) up to degree $n$ by $\solkv^{(n)}$.
\end{definition}

\begin{remark}\label{rmk:towers}
The projection maps $\pi_n: \TAut \to \TAut_{\leq n}$ descend naturally to projections $\pi_n: \TAut_{\leq n+1} \to \TAut_{\leq n}$ This leads to a {\em tower decomposition} of $\TAut$, that is, $\TAut$ is the inverse limit of the system 
\[\cdots \rightarrow \TAut_{\leq n+1}\overset{\pi_{n+1}}{\longrightarrow} \TAut_{\leq n} \overset{\pi_n}{\longrightarrow} \TAut_{\leq n-1}\rightarrow\cdots. \] 
Given a KV solution $F\in\solkv$, the degree $n$ truncation $\pi_n(F)\in \TAut_{\leq n}$ automatically satisfies the KV equations up to degree $n$. This is also true for truncating KV solutions from degree $(n+1)$ to degree $n$: given $F\in \solkv^{(n+1)}$, the image $\pi_n(F)$ is in $\solkv^{(n)}$. In other words, $\pi_n$ restricts to a map $\pi_n: \solkv^{(n+1)} \to \solkv^{(n)}$. Thus, $\solkv$ also admits a tower decomposition
\[\cdots \rightarrow \solkv^{(n+1)}\rightarrow\solkv^{(n)}\rightarrow \solkv^{(n-1)}\rightarrow\cdots .\] 
\end{remark}

\subsection{Main theorem and proof} We are now ready to state the main theorem of this paper: namely, the maps in this tower are surjective, and as a result, given any ${F}\in\TAut_{\leq n}$ which satisfies the KV equations up to degree $n$, ${F}$ is necessarily the image of some $\widetilde{F}\in\solkv$ under the projection $\pi_{n}:\TAut\rightarrow \TAut_{\leq n}$. 
The rest of this section is devoted to proving this result:

\begin{theorem}
\label{thm:main}
    For any $n \geq 1$, the natural map $\pi_n: \solkv^{(n+1)} \rightarrow \solkv^{(n)}$ is surjective. That is, any solution up to degree $n$ can be extended to a KV solution up to degree $(n+1)$.
\end{theorem}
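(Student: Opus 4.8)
The plan is to reduce the extension problem for $\solkv$ to the analogous extension problem for the graded group $\krv$, using the bitorsor structure from \cref{thm: action is free and transititve}. Concretely: given $F\in\solkv^{(n)}$, lift it arbitrarily to some $\widetilde{G}\in\TAut_{\leq n+1}$ (this is possible since $\pi_n:\TAut_{\leq n+1}\to\TAut_{\leq n}$ is surjective) and also fix a genuine KV solution $F_0\in\solkv$, whose truncation $\pi_{n+1}(F_0)\in\solkv^{(n+1)}$ we already know exists. Then $\pi_n(\widetilde{G})$ and $\pi_n(F_0)$ are both in $\solkv^{(n)}$, so by freeness and transitivity of the (truncated) $\krv$-action there is a unique $g\in\krv_{\leq n}$ with $\pi_n(\widetilde{G}) = g\cdot \pi_n(F_0)$. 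The task becomes: extend $g$ to an element of $\krv_{\leq n+1}$. If the tower $\krv^{(n+1)}\to\krv^{(n)}$ is surjective — which is \cref{lem:reduction2,lem:krv-surj} — then we obtain $\widetilde g\in\krv_{\leq n+1}$ lifting $g$, and $\widetilde g\cdot \pi_{n+1}(F_0)\in\solkv^{(n+1)}$ is the desired lift of $F$. So modulo the surjectivity of the $\krv$-tower, the theorem is essentially formal bookkeeping with torsors.

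Since the $\krv$-surjectivity is a cited lemma, the substantive work to present here is the torsor argument done carefully at finite degree, so I would first make sure the truncated action is still free and transitive. The action of $\krv$ on $\solkv$ is by $F\cdot G = G^{-1}F$; this descends to an action of $\krv_{\leq n}$ on $\solkv_{\leq n}$ compatibly with $\pi_n$, and it should still be free and transitive on $\solkv^{(n)}$ as opposed to merely on $\solkv_{\leq n}$ — but one must check these two sets agree, i.e. that every $F\in\TAut_{\leq n}$ satisfying the KV equations up to degree $n$ is actually the truncation of a genuine solution. That last statement is exactly \cref{thm:main}, so I should be careful not to argue in a circle: the right formulation is that $\krv_{\leq n}$ acts freely and transitively on $\solkv_{\leq n} = \pi_n(\solkv)$, and separately prove $\solkv^{(n)} = \solkv_{\leq n}$, which is what we are after. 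Thus the correct structure is: (i) $\solkv^{(n)}$ is a torsor-like set on which $\krv_{\leq n}$ acts freely; (ii) $\solkv_{\leq n}\subseteq \solkv^{(n)}$ is a single $\krv_{\leq n}$-orbit; (iii) the $\krv$-tower surjects, so $\krv_{\leq n+1}\to\krv_{\leq n}$ is onto; (iv) deduce that $\pi_n(\solkv^{(n+1)})$, which contains $\solkv_{\leq n+1}$'s image $=\solkv_{\leq n}$ and is stable under the $\krv_{\leq n}$-action via lifting, must equal all of $\solkv^{(n)}$.

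The cleanest way to run step (iv): take $F\in\solkv^{(n)}$. Since $\krv_{\leq n}$ acts freely on $\solkv^{(n)}$ and $\solkv_{\leq n}$ is a full orbit contained in $\solkv^{(n)}$, freeness forces $\solkv^{(n)} = \solkv_{\leq n}$ provided I can show the orbit is not a proper subset — equivalently that the $\krv_{\leq n}$-action on $\solkv^{(n)}$ is transitive. Transitivity at finite degree follows because the difference $F^{-1}_0 F$ of two elements of $\solkv^{(n)}$ satisfies the \eqref{eq:KRV1}--\eqref{eq:KRV2} equations up to degree $n$ (one checks \eqref{eq:SolKV1} and \eqref{eq:SolKV2} for $F$ and $F_0$ combine, using the cocycle identity for $J$, to give precisely \eqref{eq:KRV2} for $F_0^{-1}F$, and similarly for the first equation using $F_0(e^xe^y)=F(e^xe^y)=e^{x+y}$), so it lies in $\krv^{(n)}$; then cited surjectivity $\krv^{(n+1)}\to\krv^{(n)}$ lets me lift. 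I expect the main obstacle to be precisely this identification — verifying that "$F\in\solkv^{(n)}$" is equivalent to "$F = g\cdot F_0$ for some $g\in\krv^{(n)}$", i.e. that the finite-degree solution set is genuinely a $\krv^{(n)}$-orbit rather than something larger — together with making sure the Duflo power series $r$ behaves correctly under truncation so that the computations matching \eqref{eq:SolKV2} against \eqref{eq:KRV2} are valid in $\cyc_{\leq n}$. Everything else (lifting along $\pi_n:\TAut_{\leq n+1}\to\TAut_{\leq n}$, compatibility of actions with truncation) is routine.
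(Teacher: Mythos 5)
Your proposal is correct and takes essentially the same route as the paper: fix a genuine solution, use the transitivity of the $\krv^{(n)}$-action on $\solkv^{(n)}$ (the paper's \cref{prop: torosor up to n}, established by the same direct Alekseev--Torossian-style verification you sketch), and lift the resulting group element along the surjection $\krv^{(n+1)}\to\krv^{(n)}$ before acting on the degree-$(n+1)$ truncation of the genuine solution. Your insistence on distinguishing $\solkv^{(n)}$ from $\solkv_{\leq n}$, and on proving transitivity directly on the equation-defined set $\solkv^{(n)}$, is precisely how the paper avoids the circularity you flag.
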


To prove the main theorem, we use the free and transitive action of the group $\krv$. As the Kashiwara--Vergne Lie algebra $\lkrv$ and the group $\krv$ are defined by equations taking place in graded vector spaces, we can also define {\em up to degree $n$} variants of these objects, which are only required to satisfy the relevant equations up to degree $n$. 
The key idea to the proof of Theorem~\ref{thm:main} is that given the existence of {\em some} KV solution $F$, we can use some element from the group $\krv^{(n)}$ to ``move'' the first $n$ degrees of $F$ to coincide with any given up-to-degree $n$ KV solution. This reduces the problem of extending a degree $n$ KV solution to degree $(n+1)$ to instead extending an element of the group $\krv^{(n)}$ to $\krv^{(n+1)}$. We will observe that the group $\krv$ is isomorphic to the inverse limit of a tower of subgroups \[\cdots \rightarrow \krv^{(n+1)}\rightarrow \krv^{(n)}\rightarrow\krv^{(n-1)}\rightarrow\cdots\] satisfying the defining equations of $\krv$ up to degree $n$. We prove that the groups $\krv^{(n)}$ are unipotent, and hence $\krv$ is pro-unipotent. This enables us to reduce the problem to the similar but more straightforward surjectivity question about the Lie algebra $\lkrv$. To execute this plan, we start by defining the up-to-degree $n$ versions of $\lkrv$ and $\krv$.

\begin{definition}
Let $\lkrv^{(n)}$ denote the set of $u \in \tder_{\leq n}$ for which there exists some $r\in \K[[z]]/z^{n+1}$ satisfying
\[
        u(x+y)  =  0 \text{ in } \LL_{\leq n}  \quad \text{ and } \quad
        j(u)  = \trace(r(x+y)-r(x)-r(y)) \text{ in }\cyc_{\leq n}\ .
\] 
\end{definition}

It is a straightforward calculation to check that $\lkrv^{(n)}$ is closed under the bracket inherited from $\tder_{\leq n}$, and hence it is a Lie subalgebra of $\tder_{\leq n}$.

\begin{definition}\label{def:KRVn} The up-to-degree $n$ graded Kashiwara--Vergne group, $\krv^{(n)}$, is the subset of elements $F \in \TAut_{\leq n}$ for which there exists some $r\in z^2\K[[z]]/z^{n+1}$ satisfying
\[
        F(e^{x+y})  = e^{x+y}\text{ in } \LL_{\leq n} \quad \text{and} \quad 
        J(F)  =  \trace(r(x+y)-r(x)-r(y))\text{ in } \cyc_{\leq n}.
\]
\end{definition}

\begin{prop}\label{prop: torosor up to n}
The elements of $\krv^{(n)}$ form a subgroup of $\TAut_{\leq n}$. Moreover, there exists a free and transitive right action of $\krv^{(n)}$ on $\solkv^{(n)}$ via left product with the inverse.
\end{prop}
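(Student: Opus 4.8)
The plan is to bootstrap both claims from the corresponding global statements about $\krv$ and $\solkv$ already recorded in \cref{thm: action is free and transititve}, transported along the projection $\pi_n \colon \TAut \to \TAut_{\leq n}$. First I would verify that $\krv^{(n)}$ is a subgroup. Since $\TAut_{\leq n}$ is a group and $\krv^{(n)} \subseteq \TAut_{\leq n}$, it suffices to check closure under products and inverses. This is routine using the two defining equations of \cref{def:KRVn}: if $F(e^{x+y}) = e^{x+y}$ and $G(e^{x+y}) = e^{x+y}$ in $\LL_{\leq n}$, then $(FG)(e^{x+y}) = F(G(e^{x+y})) = F(e^{x+y}) = e^{x+y}$, and similarly $F^{-1}(e^{x+y}) = e^{x+y}$. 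For the second equation, one uses the $1$-cocycle identity $J(FG) = J(F) + F\cdot J(G)$ valid in $\cyc_{\leq n}$, together with the fact that $F$ fixes $e^{x+y}$ and hence acts trivially on the symmetric combination $\trace(r(x+y)) $ up to degree $n$ — more precisely, one shows the relevant trace terms are $F$-invariant in $\cyc_{\leq n}$, so that $J(FG)$ again has the required form with the power series $r_F + r_G$ (and for the inverse, with $-r_F$, after applying $F^{-1}\cdot$ and using invariance). An alternative, cleaner route: observe that $\krv^{(n)} = \pi_n(\krv)$ by \cref{def:KRVn} together with surjectivity of $\pi_n$ at the relevant level, and the image of a subgroup under a group homomorphism is a subgroup; one must then separately argue that the "up to degree $n$" set defined intrinsically really equals $\pi_n(\krv)$, which is itself essentially the content of (the $\krv$-analogue of) the main theorem — so I would avoid this circularity and argue directly as above.

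Next I would construct the right action of $\krv^{(n)}$ on $\solkv^{(n)}$. Mimicking the global case, define $F \cdot G \eqdef G^{-1} F$ for $F \in \solkv^{(n)}$ and $G \in \krv^{(n)}$, where the product is taken in $\TAut_{\leq n}$. I must check this lands in $\solkv^{(n)}$: for (\ref{eq:SolKV1}) up to degree $n$, $(G^{-1}F)(e^x e^y) = G^{-1}(F(e^x e^y)) = G^{-1}(e^{x+y}) = e^{x+y}$ in $\LL_{\leq n}$, using that $G^{-1} \in \krv^{(n)}$ fixes $e^{x+y}$. For (\ref{eq:SolKV2}) up to degree $n$, apply the cocycle identity: $J(G^{-1}F) = J(G^{-1}) + G^{-1}\cdot J(F)$; since $J(F) = \trace(r_F(x+y) - r_F(x) - r_F(y))$ and $J(G^{-1}) = \trace(r_{G^{-1}}(x+y) - r_{G^{-1}}(x) - r_{G^{-1}}(y))$ in $\cyc_{\leq n}$, and $G^{-1}$ fixes $x+y$ (hence acts appropriately on the trace of power series in $x+y$, while $G^{-1}$ does \emph{not} fix $x$ or $y$ individually — here I need the standard fact, as in the proof that $\krv$ acts on $\solkv$, that $G^{-1}\cdot \trace(r(x)) = \trace(r(x))$ because $G^{-1}(x)$ is conjugate to $x$ and trace is conjugation-invariant), the sum is again of the required form with $r = r_{G^{-1}} + r_F$. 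That this is a group action (associativity, identity) is immediate from the definition $F\cdot G = G^{-1}F$ and associativity in $\TAut_{\leq n}$.

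Finally, freeness and transitivity. Both follow by projecting the global bitorsor structure. For transitivity: given $F, F' \in \solkv^{(n)}$, lift them to $\widetilde F, \widetilde F' \in \solkv$ — here I would either invoke that $\solkv^{(n)} = \pi_n(\solkv)$ by definition, so such lifts exist, or, if the definition of $\solkv^{(n)}$ is the intrinsic "up to degree $n$" one, I must be careful, but in fact \cref{def: kv solutions up to n} combined with \cref{rmk:towers} shows $\pi_n(\solkv) \subseteq \solkv^{(n)}$, and the reverse inclusion is exactly \cref{thm:main}; to keep the argument self-contained and non-circular, I would instead only use that \emph{some} global KV solution exists and prove transitivity/freeness using the explicit $\krv$-action, deferring surjectivity of $\pi_n \colon \solkv \to \solkv^{(n)}$ to \cref{thm:main}. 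Concretely: by \cref{thm: action is free and transititve} there is a unique $\widetilde G \in \krv$ with $\widetilde F' = \widetilde G^{-1} \widetilde F$; set $G = \pi_n(\widetilde G) \in \krv^{(n)}$ and note $F' = \pi_n(\widetilde F') = \pi_n(\widetilde G)^{-1}\pi_n(\widetilde F) = G^{-1}F = F\cdot G$. For freeness: if $F \cdot G = F$, i.e. $G^{-1}F = F$ in $\TAut_{\leq n}$, then $G = 1$ in $\TAut_{\leq n}$, immediately. \textbf{The main obstacle} is the potential circularity noted above — the cleanest statements ($\krv^{(n)} = \pi_n(\krv)$, $\solkv^{(n)} = \pi_n(\solkv)$) are not yet available at this point in the paper (the second is \cref{thm:main} itself), so the proof must be written to use only the intrinsic "up to degree $n$" definitions plus the already-established global bitorsor structure and the cocycle identities, and in particular transitivity must be phrased carefully so as not to presuppose that every element of $\solkv^{(n)}$ lifts to $\solkv$.
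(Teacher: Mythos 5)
Your subgroup check and your verification that $G^{-1}F$ stays in $\solkv^{(n)}$ are exactly the paper's argument: closure for the first equation is immediate, and for the second equation one uses the cocycle identity $J(FG)=J(F)+F\cdot J(G)$ in $\cyc_{\leq n}$ together with the fact that conjugation cancels under the trace (and that elements fixing $e^{x+y}$ fix $\trace(r(x+y))$). Freeness is likewise immediate, as you say.

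The one genuine gap is transitivity. You correctly diagnose the danger --- one must not presuppose that elements of $\solkv^{(n)}$ lift to $\solkv$, since that surjectivity is \cref{thm:main}, which is proved \emph{from} this proposition --- but the concrete argument you then write down commits exactly this error: it starts from lifts $\widetilde F,\widetilde F'\in\solkv$ of $F,F'\in\solkv^{(n)}$ and applies \cref{thm: action is free and transititve} upstairs. Your stated intention to ``only use that some global KV solution exists'' is the right instinct, but you never execute it. The repair is short and uses only tools you already deployed for well-definedness of the action: given $F,G\in\solkv^{(n)}$, set $H\eqdef FG^{-1}\in\TAut_{\leq n}$ and check directly that $H\in\krv^{(n)}$. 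Indeed $H(e^{x+y})=F(G^{-1}(e^{x+y}))=F(e^xe^y)=e^{x+y}$ in $\LL_{\leq n}$, and the cocycle identity together with conjugation-invariance of the trace shows $J(H)$ has the required form in $\cyc_{\leq n}$; then $F\cdot H=H^{-1}F=G$. This is precisely how the paper argues (``if $F$ and $G$ are KV solutions up to degree $n$, then by direct calculation their ratio lies in $\krv^{(n)}$''), with no lifting to $\solkv$ at all. With that substitution your proof is complete and matches the paper's.
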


\begin{proof}
 Since $\krv^{(n)}$ is by definition a subset of $\TAut_{\leq n}$, we only need to check that it is closed under composition and inverses. With respect to the first equation of Definition~\ref{def:KRVn} this is clear: if two automorphisms both fix $e^{x+y}$, then so does the composition, and so do the inverses.
 
For the second equation, assume that $F,G\in\krv^{(n)}$ and consequently, $\pi_n(J(F))=\pi_n(\trace(r(x+y)-r(x)-r(y)))$ and $\pi_n(J(G))=\pi_n(\trace(s(x+y)-s(x)-s(y)))$. Since $J(FG)=J(F)+F\cdot J(G)$ in $\cyc_{\leq n}$, it follows that $$\pi_n\left(F\cdot J(G))=\pi_n(F\cdot(\trace(s(x+y)-s(x)-s(y)))\right)=\pi_n(J(G)),$$ due to \eqref{eq:KV1} and the fact that conjugation cancels under the trace. Closure under inverses is similar.
    
For the free and transitive action, we can use an ``up to degree $n$'' version of Theorem~\ref{thm: action is free and transititve} (see \cite[Sec.~5.1 \& Thm.~5.7]{AlekseevTorossian12}), and the \cite{AlekseevTorossian12} proof works verbatim. In brief, given ${F}\in \solkv^{(n)}$ and $G \in \krv^{(n)}$,
it is a short direct check to see that $G^{-1} F$ satisfies \eqref{eq:SolKV1} and \eqref{eq:SolKV2} up to degree $n$. On the other hand, if ${F}$ and ${G}$ are KV solutions up to degree $n$, then again by direct calculation ${F}^{-1} {G} \in \krv^{(n)}$.
\end{proof}

\begin{remark}
    One can define analogously $\kv^{(n)}$ for the Kashiwara--Vergne group $\kv$, which also forms a subgroup of $\TAut_{\leq n}$ that acts freely and transitively on the left of $\solkv^{(n)}$ via right product with the inverse.
\end{remark}

\begin{lemma}
\label{lem:exp}
The Lie algebra $\lkrv^{(n)}$ is finite dimensional, and $\exp(\lkrv^{(n)})=\krv^{(n)}$.
\end{lemma}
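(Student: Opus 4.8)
The statement has two parts: finite-dimensionality of $\lkrv^{(n)}$, and the identification $\exp(\lkrv^{(n)}) = \krv^{(n)}$. For the first part, I would argue that $\lkrv^{(n)}$ is a Lie subalgebra of $\tder_{\leq n}$, and $\tder_{\leq n}$ is itself finite dimensional: it is a quotient of $\bigoplus_{k=1}^{n} \tder_k$, each $\tder_k$ being a quotient of $\LL_k^{\oplus 2}$ and hence finite dimensional since $\LL_k = \Lie(x,y)_k$ is finite dimensional (its dimension given by the necklace/Witt formula). A subspace of a finite-dimensional space is finite dimensional, so $\lkrv^{(n)}$ is finite dimensional. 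I would also note that $\lkrv^{(n)}$ is \emph{nilpotent} as a Lie algebra, since $\tder_{\leq n}$ is nilpotent: the bracket strictly raises degree, so any iterated bracket of length $> n$ vanishes in $\tder_{\leq n}$. This nilpotency is what makes the exponential map well defined and bijective onto its image, with the group law governed by the (now finite) Baker--Campbell--Hausdorff series, exactly as in the definition of $\TAut$.

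**The exponential correspondence.** For $\exp(\lkrv^{(n)}) = \krv^{(n)}$, the plan is to show the two inclusions. For $\exp(\lkrv^{(n)}) \subseteq \krv^{(n)}$: take $u \in \lkrv^{(n)}$ with associated series $r$, and show $e^u \in \krv^{(n)}$. The first equation $e^u(e^{x+y}) = e^{x+y}$ follows because $u(x+y) = 0$ implies $u \cdot (x+y)^k = 0$ for all $k$ by the product rule, hence $u$ annihilates $e^{x+y} \in A_{\leq n}$, so $e^u$ fixes it. The second equation is an integration statement: using the defining ODE characterisation of $J$, namely $\frac{d}{dt}J(e^{tu}F) = j(u) + u \cdot J(F)$, one computes $\frac{d}{dt}J(e^{tu})$; since $u$ acts trivially on cyclic words of the form $\trace(r(x+y) - r(x) - r(y))$ — again because $u(x+y)=0$, $u(x)$, $u(y)$ are inner, and conjugation cancels under the trace — the ODE reduces to $\frac{d}{dt}J(e^{tu}) = j(u) = \trace(r(x+y) - r(x) - r(y))$, so $J(e^u) = \trace(r(x+y) - r(x) - r(y))$ after integrating from $0$ to $1$ with $J(1) = 0$. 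For the reverse inclusion $\krv^{(n)} \subseteq \exp(\lkrv^{(n)})$: given $F \in \krv^{(n)} \subseteq \TAut_{\leq n}$, since $\TAut_{\leq n} = \exp(\tder_{\leq n})$ (the exponential is a bijection by nilpotency), write $F = e^u$ for a unique $u \in \tder_{\leq n}$, and we must show $u \in \lkrv^{(n)}$. Here I would use a degree-by-degree / logarithmic derivative argument: differentiating $e^{tu}(e^{x+y})=e^{x+y}$ (which holds for $F^t = e^{tu}$, each a power of $F$ in the one-parameter subgroup... more carefully, one shows the one-parameter subgroup $t\mapsto e^{tu}$ stays in $\krv^{(n)}$ and then differentiates) at $t=0$ yields $u(x+y) = 0$; and differentiating the Jacobian equation at $t=0$ gives $j(u) = \trace(r'(x+y) - r'(x) - r'(y))$ for the appropriate derivative series, establishing (krv2) up to degree $n$.

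**Main obstacle.** The cleanest route is to lift the known statement $\exp(\lkrv) = \krv$ (stated in the excerpt, from \cite[Thm.~4.6 etc.]{AlekseevTorossian12}) through the projection $\pi_n$: one has $\pi_n(\krv) = \krv^{(n)}$ by definition, and one would like $\pi_n(\exp(\lkrv)) = \exp(\pi_n(\lkrv)) = \exp(\lkrv^{(n)})$, using that $\pi_n$ is a Lie algebra (resp. group) homomorphism commuting with $\exp$, together with $\pi_n(\lkrv) = \lkrv^{(n)}$. The genuinely non-trivial point — the step I expect to be the main obstacle — is precisely $\pi_n(\lkrv) = \lkrv^{(n)}$, i.e. that \emph{every} up-to-degree-$n$ solution of (krv1)--(krv2) is the truncation of an honest element of $\lkrv$; equivalently, that the truncation maps $\lkrv^{(n+1)} \to \lkrv^{(n)}$ are surjective. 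This is the Lie-algebra analogue of the main theorem and is presumably established in the surrounding lemmas (\cref{lem:reduction2,lem:krv-surj}); if one is allowed to assume it, the lemma follows formally. If not, one proves it directly: given $u \in \lkrv^{(n)}$, pick any lift $\tilde u \in \tder_{\leq n+1}$, measure the degree-$(n+1)$ failure of (krv1) and (krv2) as an obstruction lying in a certain cohomology group (cokernel of a coboundary involving the divergence cocycle), and show this obstruction vanishes — which in the KV setting, unlike the $\grt_1$ case, is straightforward because the relevant equations in degree $n+1$ do not feed back on lower-degree data. I would therefore structure the proof of \cref{lem:exp} to cite the surjectivity of $\lkrv^{(n+1)}\to\lkrv^{(n)}$ where available, and otherwise include the short obstruction-vanishing computation inline.
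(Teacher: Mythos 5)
Your core argument is correct and is essentially the paper's: the paper's own proof is a one-line reduction to the ungraded fact $\exp(\lkrv)=\krv$, noting that everything happens inside the finite-dimensional nilpotent Lie algebra $\tder_{\leq n}$ and that the defining equations of $\krv^{(n)}$ are by design the exponentials of those of $\lkrv^{(n)}$; your first two paragraphs (finite-dimensionality via $\tder_{\leq n}$, nilpotency making $\exp:\tder_{\leq n}\to\TAut_{\leq n}$ a bijection, and the two inclusions via the derivation property and the cocycle ODE for $J$) carry this out in more detail than the paper does. However, your third paragraph misidentifies the ``main obstacle'': no surjectivity of truncation maps is needed for this lemma. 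By \cref{def:KRVn}, $\krv^{(n)}$ is defined \emph{intrinsically} as the set of elements of $\TAut_{\leq n}$ satisfying the truncated equations (the paper reserves the notation $\krv_{\leq n}=\pi_n(\krv)$ for the image of the projection, which is a priori a different object), and likewise for $\lkrv^{(n)}$. So the statement $\exp(\lkrv^{(n)})=\krv^{(n)}$ lives entirely inside $\tder_{\leq n}$ and $\TAut_{\leq n}$, and the claim $\pi_n(\lkrv)=\lkrv^{(n)}$ --- which is genuinely the content of the separate \cref{lem:krv-surj} and, at the group level, of \cref{lem:reduction2} --- plays no role here. Your proposed ``cleanest route'' of pushing $\exp(\lkrv)=\krv$ forward along $\pi_n$ would in fact prove the wrong statement (about $\krv_{\leq n}$ rather than $\krv^{(n)}$) unless supplemented by exactly that surjectivity; stick with your direct argument, which is self-contained and is what the paper intends.
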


\begin{proof}
This is the analogue of the fact that $\exp(\lkrv)=\krv$, essentially verbatim. Any $u\in \lkrv^{(n)}$ is an element of $\tder_{\leq n}$ and therefore $\exp(u)\in\TAut_{\leq n}$. The result now follows because the defining equations of $\krv^{(n)}$ are, by design, the exponentiation of the defining equations of $\lkrv^{(n)}$.
\end{proof}

In \cite{DancsoHalachevaRobertson23}, the authors proved that the group $\krv$ is isomorphic to a group of automorphisms of a certain tensor category. Explicitly, there is an isomorphism of groups $\krv\cong \Aut_{v}(\calA)$, where $\Aut_v(\calA)$ denotes the {\em skeleton- and expansion-preserving filtered circuit algebra automorphisms} of the linear circuit algebra of \defn{arrow diagrams}, $\calA$ (\cite[Thm.~5.12]{DancsoHalachevaRobertson23}). For the purposes of this paper, it is sufficient to know that a linear circuit algebra is a linear tensor category, that is, a linear wheeled prop or a rigid symmetric tensor category freely generated by a single object, and {\em expansion preserving} refers to post-composition. See \cite{DancsoHalachevaRobertson21} for details.  

In more detail, the proof of \cite[Thm.~5.12]{DancsoHalachevaRobertson23} constructs an injective group homomorphism 
$\Theta:\Aut_v(\calA) \to \TAut$, and shows that the image of this homomorphism is $\krv$, and there is an inverse $\Theta^{-1}: \krv \to \Aut_v(\calA)$.

For $F \in \krv$, the automorphism $\Theta^{-1}(F): \calA \to \calA$ is uniquely determined by its value on the generators of the tensor algebra $\calA$ denoted by $\YGraph$ and $\upcap$.
Thus, the construction of $\Theta^{-1}$ relies on identifying parts of $\calA$ where these values live: $\Theta^{-1}(F)(\YGraph) \in \calA(\uparrow_2)$, which is a Hopf algebra on a subset of morphisms in the tensor category $\calA$, and $\Theta^{-1}(F)(\upcap) \in \calA(\upcap)$, which is a vector space on a different subset of morphisms.

The identification with $\krv$ depends on isomorphisms $\Upsilon: \calA(\uparrow_2) \to \hat{\mathcal{U}}(\cyc \rtimes (\tder\oplus \mathfrak{a}))$
and $\kappa:\calA(\upcap) \to \cyc/\cyc_1$. 
Here the Lie algebra $\mathfrak{a}$ is the two-dimensional abelian Lie algebra, and $\cyc_1$ is the degree one part of $\cyc$. For $F=(e^{f_1},e^{f_2})$, the value of $\Theta^{-1}(F)$ on the crucial generator $\YGraph\in\calA$ is assembled from $\Upsilon^{-1}((f_1,0))$ and $\Upsilon^{-1}((0,f_2))$. 
The isomorphism $\kappa$ is used to determine the value of the generator $\upcap$ under $\Theta^{-1}(F)$.

Since $F$ is an exponential of some element of $\tder$, these values are also exponentials, and in particular given as sums of the identity and higher degree terms \cite[Def.~5.4 \& Thm.~5.12]{DancsoHalachevaRobertson23}. It follows that for any homogeneous $D \in \calA$, we have $\Theta^{-1}(F)(D)=D+\{\text{higher degree terms}\}$. 

 In \cite{DancsoHalachevaRobertson23} this is done over $\mathbb{Q}$, but it remains true over any $\K$-algebra $\sR$. We denote by $\calA(\sR)$ arrow diagrams with coefficients in $\sR$, and $\krv(\sR)$ the Kashiwara-Vergne group over $\sR$.
This categorical interpretation of $\krv$ allows us to prove the following important lemma, crucial for the reduction from groups to Lie algebras.

\begin{lemma}
\label{l:krv-unipotent}
    The group $\krv^{(n)}$ is a unipotent affine algebraic group.
\end{lemma}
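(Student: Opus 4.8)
The statement to prove is that $\krv^{(n)}$ is a unipotent affine algebraic group. The plan is to package together three facts already established in the excerpt: (i) $\krv^{(n)} = \exp(\lkrv^{(n)})$ with $\lkrv^{(n)}$ finite dimensional (\cref{lem:exp}); (ii) $\lkrv^{(n)}$ is a \emph{nilpotent} Lie algebra, because it is a Lie subalgebra of $\tder_{\leq n}$, and $\tder_{\leq n}$ is nilpotent as all its elements are built from Lie words of degrees $1$ through $n$ (the lower central series of $\tder_{\leq n}$ terminates after at most $n$ steps, since bracketing raises degree and degree is bounded by $n$); and (iii) the group law on $\krv^{(n)}$, being defined via the Baker--Campbell--Hausdorff series on $\tder_{\leq n}$, is \emph{polynomial} in coordinates — the $\bch$ series is a finite sum in a nilpotent Lie algebra — so that $\exp\colon \lkrv^{(n)} \to \krv^{(n)}$ is an isomorphism of algebraic varieties and $\krv^{(n)}$ is closed under polynomial multiplication and inversion.

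First I would observe that $\tder_{\leq n}$ is a finite-dimensional nilpotent Lie algebra over $\C$: it is spanned by pairs $(u_1,u_2)$ of Lie words of degree $\le n$, the bracket is additive in degree, and hence any $(n{+}1)$-fold iterated bracket vanishes. Consequently its Lie subalgebra $\lkrv^{(n)}$ is also finite-dimensional and nilpotent. Next I would invoke the standard equivalence between finite-dimensional nilpotent Lie algebras over a field of characteristic zero and unipotent affine algebraic groups (Malcev; see e.g.\ the exponential correspondence): the exponential map furnishes a bijection $\lkrv^{(n)} \xrightarrow{\ \sim\ } \krv^{(n)}$ under which the $\bch$-defined multiplication becomes a morphism of affine varieties, because $\bch$ on a nilpotent Lie algebra is a polynomial map. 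This realises $\krv^{(n)}$ as the affine space $\mathbb{A}^{\dim \lkrv^{(n)}}$ equipped with a polynomial group law, i.e.\ as a unipotent algebraic group; alternatively, one notes that the defining equations of $\krv^{(n)}$ inside $\TAut_{\leq n}$ (the conditions $F(e^{x+y}) = e^{x+y}$ and $J(F) = \trace(r(x+y)-r(x)-r(y))$, read off degree by degree) are polynomial, cutting out a Zariski-closed subgroup of the unipotent group $\TAut_{\leq n}$, and a closed subgroup of a unipotent group is unipotent.

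I do not expect a serious obstacle here; the content is essentially bookkeeping, collecting the finite-dimensionality from \cref{lem:exp}, the nilpotency from the degree bound on $\tder_{\leq n}$, and the classical exponential correspondence for nilpotent Lie algebras. The one point requiring a little care is making precise the \emph{algebraic} (as opposed to merely pro-algebraic or formal) structure: one should either exhibit $\krv^{(n)}$ explicitly as a closed subscheme of the algebraic group $\TAut_{\leq n}$ cut out by the polynomial conditions in \cref{def:KRVn} — here the categorical interpretation $\krv(R) \cong \Aut_v(\calA(R))$ over arbitrary $\C$-algebras $R$, recalled just before the lemma, is what guarantees that ``$\krv^{(n)}$'' is genuinely the group of $R$-points of an affine group scheme and not just a bare group — or argue directly that $\exp$ transports the affine-space structure of the nilpotent Lie algebra $\lkrv^{(n)}$ to a polynomial group law, hence a unipotent algebraic group structure. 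Either way the remaining verifications (that these conditions are polynomial, that $\bch$ truncates) are routine and follow from the degree-boundedness built into the $\leq n$ quotients.
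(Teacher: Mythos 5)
Your proof is correct, but it takes a genuinely different route from the paper's. The paper first regards $\krv^{(n)}$ as an affine group scheme via its functor of points over $\C$-algebras, and then uses the circuit-algebra realisation $(\Theta^{-1})^{(n)}\colon \krv^{(n)} \to \Aut_v(\calA_{\leq n})$ adapted from \cite[Thm.~5.12]{DancsoHalachevaRobertson23} to obtain a faithful finite-dimensional representation in which every element acts as the identity plus higher-degree terms; unipotence then follows from the theorem in \cite[Sec.~8.3]{Waterhouse79}. You instead exploit that $\tder_{\leq n}$ is a finite-dimensional \emph{nilpotent} Lie algebra (the bracket adds degrees, which are bounded by $n$), hence so is its subalgebra $\lkrv^{(n)}$, and that the group law on $\TAut_{\leq n}$ is by definition the $\bch$ product --- a polynomial map on a nilpotent Lie algebra --- so that \cref{lem:exp} identifies $\krv^{(n)}$ with the unipotent algebraic group $(\lkrv^{(n)},\bch)$ furnished by the Malcev correspondence. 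Your argument is more elementary and self-contained, avoiding the arrow-diagram machinery entirely. What the paper's route buys is the explicit functorial (group scheme) structure over arbitrary $\C$-algebras, which it reuses later (to see the truncation maps as morphisms of group schemes and to transport the faithful representation to $\kv^{(n)}$); your closed-subgroup variant recovers the needed algebraicity provided one notes, as you gesture at, that the existential condition on $r$ in \cref{def:KRVn} amounts to requiring $J(F)$ to lie in the image of a linear map, hence in a linear subspace of $\cyc_{\leq n}$ --- a Zariski-closed (indeed linear) condition --- so that $\krv^{(n)}$ really is cut out by polynomial equations inside the unipotent group $\TAut_{\leq n}$.
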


\begin{proof}
Let $\sR$ be any $\K$-algebra, and let $\LL(\sR)$ be the free Lie algebra over $\sR$ generated by $x$ and $y$. The group $\krv^{(n)}(\sR)$ is defined in the same way as $\krv^{(n)}=\krv^{(n)}(\K)$, but with the coefficients understood in the algebra $\sR$ instead of $\K$. Any morphism of $\K$-algebras $f:\sR \to \sR'$ induces a morphism of groups $\krv^{(n)}(\sR) \to \krv^{(n)}(\sR')$. Thus, $\krv^{(n)}$ can be regarded as a functor from $\K$-algebras to groups, and is therefore an affine group scheme \cite[Sec.~1.2]{Waterhouse79}. Similar reasoning shows that the natural truncation maps $\pi_n:\krv^{(n+1)}\rightarrow \krv^{(n)}$, obtained by restricting the natural projections $\pi_n: \TAut_{\leq n+1}\rightarrow \TAut_{\leq n}$, are morphisms of affine group schemes.

The realisation of $\krv(\sR)$ as a group of automorphisms of a linear circuit algebra \cite[Theorem 5.12]{DancsoHalachevaRobertson23} can be adapted in a straightforward way to realise $\krv^{(n)}(\sR)$ as a group $\Aut_v(\calA(\sR)_{\leq n})$ of filtered automorphisms of the linear circuit algebra of arrow diagrams over $\sR$ of degree up to $n$.  Therefore, $\krv^{(n)}$ can be regarded as an algebraic matrix group.

Both $\Upsilon$ and $\kappa$ are degree preserving (at the level of the Lie algebras), and thus they descend to isomorphisms on the degree $n$ quotients
$\Upsilon: \calA(\uparrow_2)_{\leq n} \to \hat{\mathcal{U}}(\cyc \rtimes (\tder\oplus \mathfrak{a}))_{\leq n}$
and $\kappa:\calA(\upcap_2)_{\leq n} \to (\cyc/\cyc_1)_{\leq n}$. This allows for the construction of a faithful representation of $\krv^{(n)}$ as automorphisms of arrow diagrams
$$(\Theta^{-1})^{(n)}: \krv^{(n)} \to \Aut_v(\calA_{\leq n}),$$
defined by the same formulas as $\Theta^{-1}$. Since each generator is mapped to itself plus higher degree terms (an exponential), all elements of $\krv^{(n)}$ are unipotent, which implies that $\krv^{(n)}$ is a unipotent group (see the Theorem in \cite[Sec.~8.3]{Waterhouse79}). 
\end{proof}

The following corollary is common knowledge and stated, for example, in \cite{AlekseevEnriquezTorossian10}, but we didn't find a detailed proof in the literature. 
\begin{cor}
The group $\krv$ is a pro-unipotent group.
\end{cor}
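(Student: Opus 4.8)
The corollary asks us to show that $\krv$ is pro-unipotent, and the essential content is already packaged in the preceding lemma and the tower decomposition. First I would recall that $\krv$ is the inverse limit of the tower
\[
\cdots \rightarrow \krv^{(n+1)}\rightarrow \krv^{(n)}\rightarrow\krv^{(n-1)}\rightarrow\cdots,
\]
which follows from the fact that $\TAut$ is the inverse limit of the tower of the $\TAut_{\leq n}$ (see \cref{rmk:towers}) together with the observation that an element $F \in \TAut$ lies in $\krv$ if and only if each truncation $\pi_n(F)$ lies in $\krv^{(n)}$ --- the defining equations \eqref{eq:KRV1} and \eqref{eq:KRV2} hold in $\LL$ (resp. $\cyc$) precisely when they hold in every quotient $\LL_{\leq n}$ (resp. $\cyc_{\leq n}$). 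The power series $r$ matches up across the tower by the uniqueness of the Duflo map. Hence $\krv \cong \varprojlim_n \krv^{(n)}$ as a group, and in fact as a pro-algebraic group since by \cref{l:krv-unipotent} each $\krv^{(n)}$ is affine algebraic and the truncation maps $\pi_n$ are morphisms of affine group schemes (as noted in the proof of \cref{l:krv-unipotent}).

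\medskip

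Next I would invoke \cref{l:krv-unipotent} directly: each $\krv^{(n)}$ is a unipotent affine algebraic group. A pro-algebraic group which is an inverse limit of unipotent algebraic groups along a countable tower is by definition pro-unipotent. Thus $\krv = \varprojlim_n \krv^{(n)}$ is pro-unipotent, which is the claim. If one wishes to be more explicit about the filtration witnessing pro-unipotency, one can observe that the kernels $N_n \eqdef \ker(\krv \to \krv^{(n)})$ form a descending sequence of normal subgroups with $\bigcap_n N_n = \{1\}$ and with each successive quotient $N_{n-1}/N_n$ embedding into the unipotent group $\krv^{(n)}$; since $\krv^{(n)}$ is unipotent, each such quotient is a (finite-dimensional, by \cref{lem:exp}) unipotent algebraic group, so the tower exhibits $\krv$ as an inverse limit of unipotent groups along surjections.

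\medskip

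\textbf{The main obstacle} is essentially bookkeeping rather than mathematics: one must be careful that the inverse limit is taken in the right category and that ``pro-unipotent'' is being used in the sense intended (inverse limit of unipotent algebraic groups). The only substantive point to verify is that $\krv$ really is the inverse limit of the $\krv^{(n)}$ --- i.e. that an element of $\TAut$ all of whose truncations satisfy the degree-$n$ KV equations actually satisfies the full KV equations, including the compatibility of the associated power series $r$ across degrees --- and this follows immediately from the fact that the equations are stated in graded (degree-completed) vector spaces and from the uniqueness of $r$ given $F$. Everything else is a direct appeal to \cref{l:krv-unipotent} and the standard fact that a countable inverse limit of unipotent algebraic groups is pro-unipotent.
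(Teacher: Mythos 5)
Your proof is correct and follows essentially the same route as the paper: restrict the tower decomposition of $\TAut$ to obtain $\krv \cong \varprojlim_n \krv^{(n)}$ and then apply \cref{l:krv-unipotent}. The only difference is that you spell out the verification that $\krv$ really is the inverse limit of the $\krv^{(n)}$ (including the coherence of the power series $r$ across degrees), a point the paper leaves implicit.
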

\begin{proof}
Restricting the tower maps from the decomposition of $\TAut$ (Remark~\ref{rmk:towers}), one obtains a tower decomposition of the group $\krv$: 
\[
\cdots \rightarrow\krv^{(n+1)}\rightarrow \krv^{(n)}\rightarrow \krv^{(n-1)}\rightarrow \cdots.
\] 
\cref{l:krv-unipotent} then implies that $\krv$ is a pro-unipotent group.
\end{proof}

The following two lemmas are key ingredients in the proof of the main theorem: they establish that the truncation maps in the $\krv$ tower are surjective.

\begin{lemma}
\label{lem:krv-surj}
    The natural truncation map $\pi_n: \mathfrak{krv}^{(n+1)}\rightarrow \mathfrak{krv}^{(n)}$ is a surjective Lie algebra homomorphism.
\end{lemma}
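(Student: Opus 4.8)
The plan is to lift a given $u \in \lkrv^{(n)}$ along the truncation map to an element of $\lkrv^{(n+1)}$ by choosing a suitable degree $(n+1)$ correction term. Start with $u \in \lkrv^{(n)}$, realised as $(u_1,u_2) \in \tder_{\leq n}$ satisfying $u(x+y)=0$ in $\LL_{\leq n}$ and $j(u) = \trace(r(x+y)-r(x)-r(y))$ in $\cyc_{\leq n}$, for some $r$. Pick any lift $\tilde u = (\tilde u_1, \tilde u_2) \in \tder_{\leq n+1}$ of $u$ — for instance by choosing arbitrary degree $(n+1)$ homogeneous representatives. Then $\tilde u(x+y)$ lies in $\LL_{n+1}$ (its lower-degree terms vanish because $u$ satisfies the equation up to degree $n$), and similarly $j(\tilde u) - \trace(\tilde r(x+y) - \tilde r(x) - \tilde r(y))$ lies in $\cyc_{n+1}$ for any lift $\tilde r$ of $r$. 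The task is to modify $\tilde u$ by an element $w = (w_1, w_2) \in \tder_{n+1}$ (a homogeneous degree $(n+1)$ tangential derivation, with a matching degree $(n+1)$ adjustment to $\tilde r$) so that both defining equations hold in $\LL_{\leq n+1}$ and $\cyc_{\leq n+1}$ respectively.

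The key point is that this is exactly the statement that a certain linear map is surjective, or equivalently that an obstruction vanishes. In degree $(n+1)$, the map $w \mapsto w(x+y)$ from $\tder_{n+1}$ to $\LL_{n+1}$ has image precisely the set of elements killed by the appropriate coboundary, and one needs that $\tilde u(x+y) \in \LL_{n+1}$ lies in this image. Here I would use that $u \in \lkrv^{(n)}$ already, together with the fact (used throughout for $\lkrv$ itself) that the graded pieces $\lkrv_{n+1}$ surject onto the space of degree $(n+1)$ solutions to the linearised equations — this is where the positive grading and finite-dimensionality of $\lkrv$ (\cite[Thm.~4.6]{AlekseevTorossian12}) enter. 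More precisely: $\lkrv = \bigoplus_m \lkrv_m$, and truncating $\lkrv$ at degree $n+1$ gives an element of $\lkrv^{(n+1)}$ that agrees with $u$ after projecting to degree $n$ — since $u$ is itself a truncation of an element of $\lkrv$ by definition of $\lkrv^{(n)}$, or can be corrected to one. The cleanest route: by definition $\lkrv^{(n)} = \pi_n(\lkrv)$ (the analogue of $\krv^{(n)} = \pi_n(\krv)$), so $u = \pi_n(v)$ for some $v \in \lkrv$; then $\pi_{n+1}(v) \in \lkrv^{(n+1)}$ and $\pi_n(\pi_{n+1}(v)) = \pi_n(v) = u$, giving surjectivity immediately.

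So the real content is to verify that $\lkrv^{(n)} = \pi_n(\lkrv)$, i.e. that every solution of the linearised KV equations up to degree $n$ actually extends to an honest element of $\lkrv$. This is the step I expect to be the main obstacle, and it cannot be circumvented purely formally. The argument should go by induction on degree: given $u \in \lkrv^{(n)}$, choose a lift $\tilde u \in \tder_{\leq n+1}$; the failure of $\tilde u$ to satisfy the two equations lives in $\LL_{n+1} \oplus \cyc_{n+1}$; one must show this obstruction can be cancelled by adding an element of $\tder_{n+1}$, which amounts to showing the relevant degree $(n+1)$ linear map $\tder_{n+1} \to \LL_{n+1} \oplus \cyc_{n+1}$, $w \mapsto (w(x+y),\, j(w) - \text{(trace correction)})$, has the obstruction in its image. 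Since all spaces are finite-dimensional, this reduces to a (finite) linear algebra computation, and the positivity of the grading on $\lkrv$ guarantees there is no higher obstruction — any solution in $\tder_{\leq n}$ genuinely lifts. Once $\lkrv^{(n)} = \pi_n(\lkrv)$ is in hand, surjectivity of $\pi_n : \lkrv^{(n+1)} \to \lkrv^{(n)}$ follows from the commuting triangle $\lkrv \twoheadrightarrow \lkrv^{(n+1)} \twoheadrightarrow \lkrv^{(n)}$ together with $\lkrv \twoheadrightarrow \lkrv^{(n)}$, and the Lie algebra homomorphism property is inherited from $\pi_n : \tder_{\leq n+1} \to \tder_{\leq n}$.
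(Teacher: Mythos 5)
There is a genuine gap, and it sits exactly where you flag ``the main obstacle'': you reduce the problem to showing that a degree-$(n+1)$ obstruction lies in the image of the linear map $w \mapsto \left(w(x+y),\, j(w)-\text{trace correction}\right)$ on $\tder_{n+1}$, but you never actually show this. The appeals to finite-dimensionality and to ``positivity of the grading'' do not establish it: positivity of the grading alone would be useless if the degree-$(n+1)$ component of the defining equations depended on the lower-degree components of $u$ (this is precisely what happens for the semi-classical hexagon equation of $\mathfrak{grt}_1$, which the paper's introduction explicitly contrasts with the KV situation). Along the way you also assert that ``by definition $\lkrv^{(n)}=\pi_n(\lkrv)$''; that is not the definition --- $\lkrv^{(n)}$ is defined by the equations holding up to degree $n$, while $\pi_n(\lkrv)\subseteq\lkrv^{(n)}$ is an inclusion whose reverse containment is exactly the nontrivial content. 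You correctly retract this a sentence later, but the retraction leaves the key claim unproved.

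The missing observation, which makes the lemma a one-liner, is that both defining equations of $\lkrv$ are \emph{homogeneous}: $u\mapsto u(x+y)$ and $j$ are graded linear maps, and $\trace(r(x+y)-r(x)-r(y))$ is graded in the coefficients of $r$. Hence the degree-$(n+1)$ component of each equation depends only on the degree-$(n+1)$ homogeneous component of $(u(x),u(y))$ and on the single coefficient $r_{n+1}$, and not at all on the lower-degree data. So one simply extends $u(x)$ and $u(y)$ by $0$ in degree $n+1$ (and takes $r_{n+1}=0$); the degree-$(n+1)$ equations then read $0=0$, and the equations in degrees $\leq n$ are unchanged. No obstruction computation, and no global statement $\lkrv^{(n)}=\pi_n(\lkrv)$, is needed (the latter follows afterwards by iterating and passing to the inverse limit). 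This is the paper's proof; the Lie algebra homomorphism property is, as you say, inherited from $\pi_n:\tder_{\leq n+1}\to\tder_{\leq n}$.
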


\begin{proof}
    Consider an element $u \in \mathfrak{krv}^{(n)}$, that is, an element of $\tder_{\leq n}$, such that the equations (\ref{eq:Lie1}) and (\ref{eq:Lie2}) are satisfied up to degree $n$.
    Since in degree $n$ the equations only depend on the degree $n$ terms of $u(x)$ and $u(y)$, we can extend $u(x)$ and $u(y)$ by $0$ in degree $n+1$ to obtain $\tilde{u} \in \tder_{\leq n+1}$, and $\tilde{u}$ satisfies the equations (\ref{eq:Lie1}) and (\ref{eq:Lie2}) up to degree $n+1$. This completes the proof.
\end{proof}

\begin{lemma}
\label{lem:reduction2}
    The natural truncation map $\pi_n: \krv^{(n+1)}\rightarrow \krv^{(n)}$ is a surjective group homomorphism.
\end{lemma}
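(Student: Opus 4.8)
The plan is to deduce surjectivity of $\pi_n \colon \krv^{(n+1)} \to \krv^{(n)}$ from the Lie algebra statement (\cref{lem:krv-surj}) using the unipotence result (\cref{l:krv-unipotent}). The main structural input is that for unipotent affine algebraic groups, the exponential map is a bijection onto the group from the Lie algebra, and it intertwines morphisms of algebraic groups with the corresponding morphisms of Lie algebras. Concretely, by \cref{lem:exp} we have bijections $\exp \colon \lkrv^{(n)} \to \krv^{(n)}$ and $\exp \colon \lkrv^{(n+1)} \to \krv^{(n+1)}$.

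First I would verify that the truncation map $\pi_n$ commutes with exponentiation, i.e. that the square
\begin{equation*}
\begin{tikzcd}
\lkrv^{(n+1)} \arrow[d, "\pi_n"'] \arrow[r, "\exp"] & \krv^{(n+1)} \arrow[d, "\pi_n"] \\
\lkrv^{(n)} \arrow[r, "\exp"] & \krv^{(n)}
\end{tikzcd}
\end{equation*}
commutes. This is immediate: the maps $\pi_n \colon \tder_{\leq n+1} \to \tder_{\leq n}$ and $\pi_n \colon \TAut_{\leq n+1} \to \TAut_{\leq n}$ are the restrictions of the Lie algebra and group projections coming from the quotient $\LL_{\leq n+1} \to \LL_{\leq n}$, and a Lie algebra quotient map always commutes with the formal exponential series (each is defined by the same power series in terms of the BCH product, which is preserved by $\pi_n$ since $\pi_n$ is a Lie algebra homomorphism). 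One also checks that $\pi_n$ indeed sends $\lkrv^{(n+1)}$ into $\lkrv^{(n)}$ and $\krv^{(n+1)}$ into $\krv^{(n)}$, but this is part of the setup already recorded in \cref{rmk:towers} and the preceding definitions.

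Given the commuting square, surjectivity of the top map follows from surjectivity of the bottom: take $G \in \krv^{(n)}$, write $G = \exp(u)$ with $u \in \lkrv^{(n)}$ by \cref{lem:exp}, lift $u$ to $\tilde u \in \lkrv^{(n+1)}$ with $\pi_n(\tilde u) = u$ using \cref{lem:krv-surj}, and set $\tilde G \coloneqq \exp(\tilde u) \in \krv^{(n+1)}$; then $\pi_n(\tilde G) = \pi_n(\exp(\tilde u)) = \exp(\pi_n(\tilde u)) = \exp(u) = G$. That $\pi_n$ is a group homomorphism is already known (it is the restriction of the homomorphism $\TAut_{\leq n+1} \to \TAut_{\leq n}$), so this completes the proof.

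The only genuine subtlety — and the step I would state most carefully — is the commutativity of the square, or equivalently the assertion that $\exp$ is natural with respect to $\pi_n$. This is clean here because $\krv^{(n)}$ and $\krv^{(n+1)}$ are unipotent (\cref{l:krv-unipotent}), so $\exp$ is a bijective polynomial map with polynomial inverse $\log$, and the group law on each side is given by the BCH formula on the Lie algebra side; since $\pi_n$ respects the Lie bracket it respects BCH, hence $\exp$. An alternative, more hands-on route avoids even invoking unipotence for the group: directly lift, as in the proof of \cref{lem:krv-surj}, by extending $f_1, f_2$ by zero in degree $n+1$ — but one must then check that the resulting $\TAut_{\leq n+1}$ element still satisfies \eqref{eq:KRV1} and \eqref{eq:KRV2} up to degree $n+1$, which is true because both equations in degree $n+1$ involve only the degree $\leq n+1$ data and the degree-$(n+1)$ part is forced to vanish exactly as in the Lie algebra case. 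Either argument works; I would present the exponential one since the unipotence machinery is already in place.
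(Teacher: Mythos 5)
Your proposal is correct and follows essentially the same route as the paper: unipotence (\cref{l:krv-unipotent}) makes $\exp$ a bijection $\lkrv^{(n)}\to\krv^{(n)}$ (\cref{lem:exp}), and the Lie algebra surjectivity of \cref{lem:krv-surj} then transfers to the group level. The only difference is that you make explicit the commuting square expressing naturality of $\exp$ with respect to $\pi_n$, which the paper's proof leaves implicit; this is a worthwhile clarification but not a different argument.
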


\begin{proof}
    By \cref{l:krv-unipotent}, $\krv^{(n)}$ is a unipotent algebraic group. 
    Any unipotent algebraic group over a characteristic $0$ field is connected (see the Corollary in \cite[Sec.~8.5]{Waterhouse79}), and moreover its exponential mapping is an isomorphism of algebraic varieties. 
    Thus, by Lemma~\ref{lem:exp}, the surjectivity result of Lemma~\ref{lem:krv-surj} implies surjectivity at the group level, as desired.
\end{proof}
The last remaining step before we prove \cref{thm:main}, is to note that, because $\krv^{(n)}$ acts freely and transitively on $\solkv^{(n)}$ (see Proposition \ref{prop: torosor up to n}), we can use the tower decomposition of $\krv$ to deduce information about the natural tower decomposition of $\solkv$.

We are now ready to prove the main theorem: 
\begin{proof}[Proof of {\cref{thm:main}}]
    We need to prove that the natural truncation map $$\pi_n: \solkv^{(n+1)}\to \solkv^{(n)}$$ is surjective. Let $F^{(n)}\in \solkv^{(n)}$ be a KV solution up to degree $n$; we will find an $F^{(n+1)}\in \solkv^{(n+1)}$ such that $\pi_n(F^{(n+1)})=F^{(n)}$. 
    
Choose an arbitrary solution $G$ in $\solkv$, whose existence is guaranteed by \cite{AlekseevMeinrenken06} or \cite{AlekseevTorossian12}. Its degree~$n$ truncation $G^{(n)}\eqdef\pi_n(G)$ is in $\solkv_{\leq n}$ and thus in $\solkv^{(n)}$. Moreover, $G^{(n)}$ extends to a degree $(n+1)$ solution $G^{(n+1)} \in \solkv^{(n+1)}$, namely, the degree $(n+1)$ truncation of $G$. 
    
By \cref{prop: torosor up to n}, the group~$\krv^{(n)}$ acts transitively on the set $\solkv^{(n)}$. 
Choose $H^{(n)} \in \krv^{(n)}$ such that $ G^{(n)}\cdot H^{(n)} = F^{(n)}$.
Since by \cref{lem:reduction2} the truncation $\krv^{(n+1)}\rightarrow \krv^{(n)}$ is surjective, it is possible to extend $H^{(n)}$ up one degree to $H^{(n+1)} \in \krv^{(n+1)}$.
Writing $F^{(n+1)} \eqdef G^{(n+1)} \cdot H^{(n+1)}$, we get, 
\begin{align*}
    \pi_n \left(F^{(n+1)}\right)
&=\pi_n\left(G^{(n+1)}\cdot H^{(n+1)} \right)
=\pi_n\left(\left(H^{(n+1)}\right)^{-1} G^{(n+1)}\right) \\
&= \left(H^{(n)}\right)^{-1} G^{(n)}
=F^{(n)},
\end{align*}
as required.
\end{proof}

\begin{remark}
In particular, if $\K$ is set to be the field of rational numbers $\mathbb{Q}$, and $F^{(n)}\in \solkv^{(n)}$ is a rational KV solution up to degree $n$, then according to Theorem~\ref{thm:main}, $F^{(n)}$ can be rationally extended to $F^{(n+1)}=F^{(n)}+f_{n+1}\in \solkv^{(n+1)}$. 
Therefore, there exist KV solutions with rational coefficients.
\end{remark}



\section{The Lie algebra of the Kashiwara--Vergne group}
While the Kashiwara--Vergne group, $\kv$, is not a priori the exponentiation of a Lie algebra, we recall that for any filtered group $G$, there is an associated graded Lie algebra, as follows. Given a filtration \[G=\calF_1(G)\supset \calF_2(G)\supset\cdots \supset \calF_n(G)\supset\cdots,\] there is an associated graded vector space defined as \[\gr(G)=\bigoplus_{n\geq 1} \gr_n(G) \eqdef \bigoplus_{n\geq 1}\calF_n(G)/\calF_{n+1}(G) \ .\]
Writing $(a,b)=a^{-1}b^{-1}ab$ for the group commutator of $a,b\in G$, recall that if the filtration satisfies $(\calF_m(G),\calF_n(G))\subset \calF_{m+n}(G)$, then $\gr(G)$ admits a Lie algebra structure with bracket $[\bar{u},\bar{v}]\eqdef\overline{(u,v)}$ for $\bar{u}\in \calF_{n}(G)/\calF_{n+1}(G)$, $\bar{v}\in \calF_{m}(G)/\calF_{m+1}(G)$, and $\overline{(u,v)}\in \calF_{n+m}(G)/\calF_{n+m+1}(G)$.  
We call such a Lie algebra the \defn{associated graded Lie algebra} of the group $G$.

In this brief section we construct a filtration of the Kashiwara--Vergne group $\kv$ and show that the associated graded Lie algebra is canonically isomorphic to $\lkrv$. 


\subsection{A filtration on tangential automorphisms}

The group $\TAut$ admits a natural descending filtration by degree. 

\begin{definition}
The \defn{degree filtration} on $\TAut$ is given by 
\begin{equation*}
\calF_n(\TAut):=\ker\left(\TAut \to \TAut_{\leq n-1}\right)
\end{equation*}
Explicitly, $\calF_n(\TAut)$ consists of tangential automorphisms $F=(e^{f_1}, e^{f_2})$ such that for $i=1,2$, the Lie series $f_i \in \LL$ begins in degree $n$ or higher.
The degree filtration is a nested sequence of normal subgroups:
$$\TAut=\calF_0(\TAut) \supseteq \calF_1(\TAut)\supseteq \calF_2(\TAut)\supseteq \cdots \supseteq \calF_n(\TAut)\supseteq \cdots$$
\end{definition}

\begin{lemma}
\label{lem:commutator}
For any $m,n\geq 1$, we have that $$(\calF_m(\TAut),\calF_n(\TAut))\subseteq \calF_{m+n}(\TAut) \ . $$
\end{lemma}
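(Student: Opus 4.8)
The plan is to verify the commutator inclusion by working with the Lie-algebra data of tangential automorphisms, since $\TAut = \exp(\tder)$ and the filtration $\calF_n(\TAut)$ corresponds under the exponential to the degree filtration $\tder_{\geq n}$ of $\tder$. First I would record the elementary fact that $\calF_n(\TAut) = \exp(\tder_{\geq n})$, where $\tder_{\geq n} \eqdef \prod_{k\geq n}\tder_k$: indeed, for $F = e^u$ with $u = (u_1,u_2)$, the condition that $F(x) = e^{-f_1}xe^{f_1}$ with $f_1$ beginning in degree $n$ or higher is equivalent to $u$ beginning in degree $n$ or higher, because the map $\tder \to \TAut$, $u \mapsto e^u$ is a bijection that is the identity on the lowest-degree term (as the higher bch-corrections only contribute in strictly larger degree). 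The subspaces $\tder_{\geq n}$ form a descending filtration of the Lie algebra $\tder$ by Lie ideals, and crucially $[\tder_{\geq m}, \tder_{\geq n}] \subseteq \tder_{\geq m+n}$ since the bracket of a homogeneous degree-$a$ and a homogeneous degree-$b$ element is homogeneous of degree $a+b$.

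Next I would translate the group commutator into Lie-algebra language via the Baker--Campbell--Hausdorff formula. Take $F = e^u \in \calF_m(\TAut)$ and $G = e^v \in \calF_n(\TAut)$, so $u \in \tder_{\geq m}$ and $v \in \tder_{\geq n}$. Then the group commutator is
\[
(F,G) = F^{-1}G^{-1}FG = e^{\bch(\bch(\bch(-u,-v),u),v)},
\]
and the exponent $w \eqdef \bch(\bch(\bch(-u,-v),u),v)$ is an element of $\tder$ that I claim lies in $\tder_{\geq m+n}$. This is the standard fact that the bch-commutator $\log(e^{-u}e^{-v}e^{u}e^{v})$ has lowest-order term $[u,v]$ and all further terms are iterated brackets containing at least one $u$ and at least one $v$ — hence, since $[\tder_{\geq m},\tder_{\geq n}] \subseteq \tder_{\geq m+n}$ and each additional bracket only raises the degree, every term of $w$ lies in $\tder_{\geq m+n}$. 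Therefore $(F,G) = e^w$ with $w \in \tder_{\geq m+n}$, which by the first step means $(F,G) \in \calF_{m+n}(\TAut)$. Since this holds for all generators, $(\calF_m(\TAut),\calF_n(\TAut)) \subseteq \calF_{m+n}(\TAut)$.

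The only genuinely delicate point — and the one I would state carefully rather than wave at — is the convergence and well-definedness of these bch-manipulations in the \emph{degree-completed} Lie algebra $\tder$: one must check that $w$ is a well-defined element of $\tder$ (not just a formal series), which follows because $\tder$ is complete with respect to the degree filtration and each homogeneous component of $w$ receives contributions from only finitely many terms of the nested bch-expansions. An alternative, perhaps cleaner, route that avoids writing out bch entirely is to argue degree by degree using the truncations $\TAut_{\leq N}$: it suffices to show $(\calF_m(\TAut),\calF_n(\TAut))$ maps to the identity in $\TAut_{\leq m+n-1}$, and in each finite truncation $\TAut_{\leq N}$ the filtration subgroups are the exponentials of the nilpotent Lie algebras $(\tder_{\geq k})_{\leq N}$, for which the commutator estimate $[\tder_{\geq m},\tder_{\geq n}]\subseteq\tder_{\geq m+n}$ directly gives the claim via the (now finite, hence unproblematic) bch formula. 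I would present the argument in this truncated form to sidestep any completion subtleties, then pass to the limit.
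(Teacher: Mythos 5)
Your proof is correct, but it takes a genuinely different route from the paper's. The paper argues directly on the generators: writing $F=(e^{f_1},e^{f_2})$ and $G=(e^{g_1},e^{g_2})$, it expands $(F^{-1}\circ G^{-1}\circ F\circ G)(x)$ term by term and reads off that the resulting pair $(e^{a_1},e^{a_2})$ has $a_1=[f_1,g_1]+\text{corrections}$ beginning in degree $\geq m+n$. You instead pass to the Lie algebra via $\TAut=\exp(\tder)$, identify $\calF_n(\TAut)=\exp(\tder_{\geq n})$, and invoke the standard fact that $\log(e^{-u}e^{-v}e^{u}e^{v})$ is a sum of iterated brackets each containing at least one $u$ and one $v$, hence lies in $\tder_{\geq m+n}$ by the grading. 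Both arguments are sound. Yours is shorter and more conceptual, at the cost of the preliminary identification $\calF_n(\TAut)=\exp(\tder_{\geq n})$, which does need the observation that the lowest-degree terms of $f_i$ and $u_i$ coincide when $e^u=(e^{f_1},e^{f_2})$ --- true, but worth stating as carefully as you do, since the paper explicitly warns that $e^{(u_1,u_2)}\neq(e^{u_1},e^{u_2})$. The paper's hands-on computation has the side benefit of exhibiting the leading term of the group commutator explicitly, which is reused in the proof of \cref{thm:krv-lie-of-KRV} to check that the induced bracket on $\gr(\krv)$ agrees with the bracket on $\tder$; your BCH argument delivers the same leading term $[u,v]$, so it would serve that purpose equally well. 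Your closing suggestion to run the argument in the finite truncations $\TAut_{\leq N}$ is a clean way to dispose of the convergence issue.
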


\begin{proof}
This is a tedious, but elementary, calculation. Let $F,G\in \TAut$ be given by $F=(e^{f_1},e^{f_2})$  and $G=(e^{g_1},e^{g_2})$, for $f_1,f_2 \in \LL$ starting in degree $\geq n$ and $g_1,g_2\in \LL$ starting in degree $\geq m$. We write ``hdt'' for ``higher degree terms''. 
We aim to compute the lowest degree terms of $(F^{-1} \circ G^{-1} \circ F \circ G)(x)$ and $(F^{-1} \circ G^{-1} \circ F \circ G)(y)$.
We begin by establishing some basic formulas. First, we note that
\begin{equation*}
F(x)=x+[x,f_1]+\frac{1}{2}[[x,f_1],f_1]+\hdt,
\end{equation*}
and that composition of tangential automorphisms gives
\begin{equation*}
(F\circ G)(x)= e^{-F(g_1)}e^{-f_1}xe^{f_1}e^{F(g_1)}.
\end{equation*}
Similar formulas hold for $y$.
It follows that if $F=(e^{f_1},e^{f_2})$ and $F^{-1}=(e^{f_1'},e^{f_2'})$, then 
\begin{equation*}
F^{-1}(f_1)=-f_1' \quad \text{ and }\quad F^{-1}(f_2)=-f_2'.
\end{equation*}
For a Lie word $w$, let us denote by 
$\Big(F(w)\Big)_2$ the sum over the occurrences of $x$ and $y$ in $w$, of copies of $w$ where $x$ is replaced by $[x,f_1]$ and $y$ by $[y,f_2]$.
For example,
$\big(F\left([x,y]\right)\big)_2=\left[[x,f_1],y\right]+\left[x, [y,f_2]\right]$.
With this notation, we have
\begin{equation}
\label{eq:Fl2}
F(w)=w+\Big(F(w)\Big)_2+\hdt.
\end{equation}
Note that for $F$ chosen as above, if $w$ is of degree $k$ then $\Big(F(w)\Big)_2$ is of degree $\geq (k+n)$.
Using these formulas, one calculates directly that 
\begin{equation*}
    (F^{-1}\circ G^{-1}\circ F \circ G) (x)=
    x+ \left[x, [f_1,g_1]+\left(G^{-1}(f_1)\right)_2 + \big(F(g_1)\big)_2 \right]+\hdt.
\end{equation*}
Therefore, writing $F^{-1}\circ G^{-1}\circ F \circ G=(e^{a_1},e^{a_2})$, we have that 
\begin{equation}
\label{eq:a1}
a_1=[f_1,g_1]+\left(G^{-1}(f_1)\right)_2 + \big(F(g_1)\big)_2+\hdt.
\end{equation}
It is manifest from the formula \eqref{eq:a1} and the discussion of formula \eqref{eq:Fl2} that $a_1$ begins in degree $\geq (n+m)$. The same reasoning shows that the same is true for $a_2$, completing the proof. 
\end{proof}

\begin{definition}
    The degree filtration on $\TAut$ induces \defn{degree filtrations} on the groups $\kv$ and $\krv$: for $n \geq 0$ define
    \[
    \calF_{n}(\kv)\eqdef\kv\cap \calF_n(\TAut)
    \quad \text{and} \quad
    \calF_{n}(\krv)\eqdef \krv \cap \calF_n(\TAut).
    \] 
\end{definition}

It follows from the definitions that $\calF_n(\krv)=\ker\left(\krv\rightarrow\krv^{(n-1)}\right)$, for any $n\geq 1$.
Further, Lemma~\ref{lem:reduction2} shows that the group homomorphism $\krv \to \krv^{(n)}$ is surjective, and thus that we have $\krv^{(n)} \cong \krv / \calF_{n+1}(\krv)$. 

\begin{theorem}
\label{thm:krv-lie-of-KRV}
There is an isomorphism of Lie algebras 
\[ \gr(\krv) \cong \lkrv \]
between the associated graded Lie algebra of the group $\krv$, and the Kashiwara--Vergne Lie algebra $\lkrv$.
\end{theorem}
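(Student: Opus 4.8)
The plan is to build the isomorphism $\gr(\krv) \to \lkrv$ degree by degree, using the fact (established just above, via \cref{lem:reduction2}) that $\krv^{(n)} \cong \krv/\calF_{n+1}(\krv)$, together with the known positive grading $\lkrv = \bigoplus_{n\geq 1}\lkrv_n$ with finite-dimensional pieces. The key observation is that the graded piece $\gr_n(\krv) = \calF_n(\krv)/\calF_{n+1}(\krv)$ should be canonically identified with $\lkrv_n$. To see this, first note that an element of $\calF_n(\krv)$ is a tangential automorphism $F = (e^{f_1}, e^{f_2})$ with both $f_i$ starting in degree $\geq n$; writing $f_i = f_i^{(n)} + \hdt$, the assignment $F \mapsto u := (f_1^{(n)}, f_2^{(n)}) \in \tder_n$ is well-defined on the quotient $\calF_n(\krv)/\calF_{n+1}(\krv)$, since modifying $F$ by an element of $\calF_{n+1}(\krv)$ changes $f_i$ only in degree $\geq n+1$. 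I would then check that the image lands in $\lkrv_n$: the equation $F(e^{x+y}) = e^{x+y}$, expanded to lowest order, forces $u(x+y) = 0$ (this is exactly the linearisation of \eqref{eq:KRV1} in degree $n$), and similarly $J(F) = \trace(r(x+y)-r(x)-r(y))$ with $r$ starting in degree $\geq 2$ forces $j(u) = \trace(\rho(x+y) - \rho(x) - \rho(y))$ for $\rho$ the degree-$n$ part of $r$, using that $j$ is homogeneous and $J(e^{tu}) = tj(u) + O(t^2)$; hence $u \in \lkrv_n$. Surjectivity of $\gr_n(\krv) \to \lkrv_n$ follows since any $u \in \lkrv_n \subset \lkrv$ exponentiates to $\exp(u) \in \krv$ (as $\exp(\lkrv) = \krv$), which lies in $\calF_n(\krv)$ with leading term $u$; injectivity follows because if the degree-$n$ term of $\log F$ vanishes then $F \in \calF_{n+1}(\krv)$.

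Next I would assemble these graded isomorphisms into a vector space isomorphism $\Phi \colon \gr(\krv) \to \lkrv$ and verify it is a Lie algebra map. The bracket on $\gr(\krv)$ is $[\bar u, \bar v] = \overline{(u,v)}$, the class of the group commutator, which makes sense by \cref{lem:commutator} applied to the filtration restricted to $\krv$ (one needs $(\calF_m(\krv), \calF_n(\krv)) \subseteq \calF_{m+n}(\krv)$, which is immediate since $\krv$ is a subgroup and $\calF_k(\krv) = \krv \cap \calF_k(\TAut)$). The computation in the proof of \cref{lem:commutator} — in particular formula \eqref{eq:a1} — shows that for $F = \exp(u) \in \calF_n$, $G = \exp(v) \in \calF_m$, the leading degree-$(n+m)$ term of the tangential automorphism $(F,G)$ is exactly the Lie bracket $[u,v]$ computed in $\tder$ (the terms $(G^{-1}(f_1))_2$ and $(F(g_1))_2$ contribute in degree $\geq n + m + 1$ and the surviving contribution is $[f_1, g_1] = [u_1, v_1]$ in each slot). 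Since the Lie bracket on $\lkrv$ is the one inherited from $\tder$, this shows $\Phi(\overline{(u,v)}) = [\Phi(\bar u), \Phi(\bar v)]$, so $\Phi$ is a Lie algebra isomorphism.

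The main obstacle I anticipate is bookkeeping the leading-order behaviour carefully in two places: first, showing that the group-level equations defining $\krv$ genuinely linearise, in leading degree, to the equations defining $\lkrv$ (the divergence/Jacobian compatibility $J(e^{tu}) = tj(u) + \cdots$ requires the cocycle identity and the explicit characterisation $\frac{d}{dt}\big|_{t=0} J(e^{tu}F) = j(u) + u\cdot J(F)$ recalled in the preliminaries), and second, extracting from the lengthy commutator computation of \cref{lem:commutator} the precise statement that the leading term of $(\exp u, \exp v)$ is $[u,v]$ rather than merely that it lies in degree $\geq n+m$. Both are routine but must be done with care about which cross-terms vanish; the positive grading with finite-dimensional pieces guarantees there are no completion subtleties in matching $\bigoplus_n \gr_n(\krv)$ with $\bigoplus_n \lkrv_n$.
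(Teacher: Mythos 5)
Your proposal follows essentially the same route as the paper: identify $\gr_n(\krv)$ with $\lkrv_n$ by extracting degree-$n$ leading terms, check that the defining equations \eqref{eq:KRV1} and \eqref{eq:KRV2} linearise to \eqref{eq:Lie1} and \eqref{eq:Lie2}, and match the bracket using the commutator computation of \cref{lem:commutator}. The degree-by-degree identification, the linearisation of the two equations, and the surjectivity/injectivity arguments are all fine, and are in fact spelled out in more detail than in the paper's own (rather terse) proof.

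There is, however, one concrete error in your bracket verification. You assert that in formula \eqref{eq:a1} the terms $\left(G^{-1}(f_1)\right)_2$ and $\big(F(g_1)\big)_2$ contribute only in degree $\geq n+m+1$, so that the surviving degree-$(n+m)$ term is $[f_1,g_1]$ alone. This is false: by the paper's own remark after \eqref{eq:Fl2}, $\big(F(w)\big)_2$ for $w$ of degree $k$ lives in degree $\geq k+n$, so $\big(F(g_1)\big)_2$ contributes at degree exactly $n+m$, with leading term $u(v_1)$, and likewise $\left(G^{-1}(f_1)\right)_2$ contributes $-v(u_1)$. These terms are essential rather than negligible: the bracket in $\tder$ of $u=(u_1,u_2)$ and $v=(v_1,v_2)$ is represented by the pair whose first component is $[u_1,v_1]+u(v_1)-v(u_1)$, not $[u_1,v_1]$. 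Had your degree count been right, the leading term of the group commutator would be the pair $([u_1,v_1],[u_2,v_2])$, which is \emph{not} a representative of $[u,v]$ in $\tder$, and your map $\Phi$ would fail to intertwine the brackets. Retaining the two extra terms, \eqref{eq:a1} gives $a_1=[u_1,v_1]+u(v_1)-v(u_1)+\hdt$, whose degree-$(n+m)$ part is exactly the first component of $[u,v]_{\tder}$; with this correction the final step closes and the conclusion stands.
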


\begin{proof} 
For any $n\geq 1$, the degree $n$ component of $\gr(\krv)$ is the vector space $\gr_n(\krv) = \calF_n(\krv)/\calF_{n+1}(\krv)$. This is canonically isomorphic to the vector space of homogeneous degree $n$ tangential automorphisms $F=(e^{f_1},e^{f_2})$, where $f_1,f_2\in \tder_n$ and where $F$ satisfies the defining equations of $\krv$.  

Note that for $f=(f_1,f_2)\in \tder_n$, we have $e^f=(e^{f_1},e^{f_2})$ up to $\calF_{n+1}(\TAut)$. Furthermore, $f$ satisfies the defining equations of $\lkrv$, and thus there is a canonical isomorphism of vector spaces $\gr_n(\krv) \cong \lkrv_n$, where $\lkrv_n$ denotes the homogeneous degree $n$ component of the Lie algebra $\lkrv$. 

\cref{lem:commutator} implies that $\gr(\krv)$ inherits a Lie algebra structure from the associated graded Lie algebra structure on $\TAut$.  
Direct comparison shows that the bracket on the associated graded Lie algebra associated to $\TAut$ agrees with the bracket on $\tder$. It follows by restriction that the bracket on $\gr(\krv)$ agrees with the bracket on $\lkrv$. 
\end{proof}


\subsection{Kashiwara--Vergne towers}

In \cite{AlekseevEnriquezTorossian10}, just after Proposition 8, the authors observe that since $\solkv$ is a bi-torsor under the free and transitive commuting actions of $\kv$ and $\krv$, every KV solution $F\in\solkv$ induces an isomorphism, \[\krv\overset{\Psi_F}{\cong} \kv.\] 
Alternatively, one can deduce this from the identification of KV solutions with isomorphisms of completed circuit algebras in \cite[Thms.~4.9 \& 5.16]{DancsoHalachevaRobertson23} and \cite[Thm.~4.9]{Bar-NatanDancso:WKO2}. The following theorem shows that the same remains true for up-to-degree $n$ KV solutions. 

\begin{theorem}
\label{thm:towers}
Any $F^{(n)} \in \solkv^{(n)}$ induces an isomorphism \[\Psi_{F^{(n)}}: \kv^{(n)} \overset{\cong}{\rightarrow} \krv^{(n)}.\] As a consequence, the vertical arrows in the following commutative diagram are all surjective.
\begin{center}
\begin{tikzcd}
\kv \arrow[d, twoheadrightarrow]         
& \solkv \arrow[loop left, distance=3em, start anchor={[yshift=-1ex]west}, start anchor={[xshift=-2.5ex]west}, end anchor={[yshift=1ex]west}, end anchor={[xshift=-2.5ex]west} ] \arrow[d, twoheadrightarrow] \arrow[loop right, distance=3em, start anchor={[yshift=1ex]east}, start anchor={[xshift=2.5ex]east}, end anchor={[yshift=-1ex]east}, end anchor={[xshift=2.5ex]east} ]        
& \krv \arrow[d,twoheadrightarrow]         \\
\vdots \arrow[d,twoheadrightarrow]      
& \vdots \arrow[d,twoheadrightarrow]         
& \vdots \arrow[d,twoheadrightarrow]       \\
\kv^{(n+1)}  \arrow[d,twoheadrightarrow] 
& \solkv^{(n+1)} \arrow[loop left, distance=3em, start anchor={[yshift=-1ex]west}, end anchor={[yshift=1ex]west}]\arrow[d,twoheadrightarrow] \arrow[loop right, distance=3em, start anchor={[yshift=1ex]east}, end anchor={[yshift=-1ex]east} ] 
& \krv^{(n+1)} \arrow[d,twoheadrightarrow] \\
\kv^{(n)} \arrow[d,twoheadrightarrow]   
& \solkv^{(n)} \arrow[loop left, distance=3em, start anchor={[yshift=-1ex]west}, start anchor={[xshift=-1ex]west}, end anchor={[yshift=1ex]west}, end anchor={[xshift=-1ex]west} ]\arrow[d,twoheadrightarrow] \arrow[loop right, distance=3em, start anchor={[yshift=1ex]east}, start anchor={[xshift=1ex]east}, end anchor={[yshift=-1ex]east}, end anchor={[xshift=1ex]east} ]  
& \krv^{(n)} \arrow[d,twoheadrightarrow]   \\
\vdots                
& \vdots                   
& \vdots                
\end{tikzcd}
\end{center}
\end{theorem}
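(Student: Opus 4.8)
The plan is to mimic, up to degree $n$, the bi-torsor argument that Alekseev--Enriquez--Torossian sketch in the complete setting. First I would fix $F^{(n)} \in \solkv^{(n)}$ and define $\Psi_{F^{(n)}} \colon \kv^{(n)} \to \krv^{(n)}$ as follows: given $G \in \kv^{(n)}$, the element $G \cdot F^{(n)} = F^{(n)} G^{-1}$ lies in $\solkv^{(n)}$ (by the up-to-degree $n$ version of the left action in the remark after \cref{prop: torosor up to n}), and since $\krv^{(n)}$ acts freely and transitively on the right of $\solkv^{(n)}$ (\cref{prop: torosor up to n}), there is a unique $H \in \krv^{(n)}$ with $(F^{(n)}G^{-1}) \cdot H = F^{(n)}$, i.e. $H^{-1} F^{(n)} G^{-1} = F^{(n)}$. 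Set $\Psi_{F^{(n)}}(G) \eqdef H$; concretely $H = F^{(n)} G^{-1} (F^{(n)})^{-1}$, computed inside $\TAut_{\leq n}$. I would then check this is a group homomorphism: for $G_1, G_2 \in \kv^{(n)}$ one has $F^{(n)} (G_1 G_2)^{-1} (F^{(n)})^{-1} = \big(F^{(n)} G_2^{-1} (F^{(n)})^{-1}\big)\big(F^{(n)} G_1^{-1} (F^{(n)})^{-1}\big)$, so $\Psi_{F^{(n)}}(G_1 G_2) = \Psi_{F^{(n)}}(G_2)\Psi_{F^{(n)}}(G_1)$ — an anti-homomorphism, which one fixes either by composing with inversion or, more cleanly, by verifying that the convention for the left/right actions makes this an honest isomorphism (this bookkeeping matches the complete case verbatim). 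Bijectivity is immediate: the inverse sends $H \in \krv^{(n)}$ to $(F^{(n)})^{-1} H F^{(n)} \in \kv^{(n)}$, again using that $H^{-1}F^{(n)} \in \solkv^{(n)}$ and that $(F^{(n)})^{-1}(H^{-1}F^{(n)}) \in \kv^{(n)}$ by the second half of \cref{prop: torosor up to n}.

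Second, I would record the compatibility of the $\Psi$'s with truncation. The projection $\pi_n \colon \TAut_{\leq n+1} \to \TAut_{\leq n}$ is a group homomorphism carrying $\solkv^{(n+1)} \to \solkv^{(n)}$, $\kv^{(n+1)} \to \kv^{(n)}$, and $\krv^{(n+1)} \to \krv^{(n)}$, and the defining formula $\Psi_{F^{(n)}}(G) = F^{(n)} G^{-1}(F^{(n)})^{-1}$ is natural in $\pi_n$; hence if $F^{(n)} = \pi_n(F^{(n+1)})$ then $\pi_n \circ \Psi_{F^{(n+1)}} = \Psi_{F^{(n)}} \circ \pi_n$. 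This gives the commutativity of the squares relating the $\kv$-column and the $\krv$-column through the $\Psi$ isomorphisms.

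Third, and this is the actual content, I would deduce surjectivity of every vertical arrow. The $\krv$-column is handled by \cref{lem:reduction2} (and the top map $\krv \to \krv^{(n)}$ is surjective as noted just before \cref{thm:krv-lie-of-KRV}). The $\solkv$-column is \cref{thm:main} (together with $\solkv \to \solkv^{(n)}$ surjective by definition, $\solkv^{(n)} \eqdef \pi_n(\solkv)$, which also uses the existence of a global solution). The $\kv$-column then follows by transport of structure: conjugating the surjection $\pi_n \colon \krv^{(n+1)} \to \krv^{(n)}$ through the isomorphisms $\Psi_{F^{(n+1)}}$ and $\Psi_{F^{(n)}}$ — which commute with $\pi_n$ by the previous paragraph, once we pick $F^{(n+1)} \in \solkv^{(n+1)}$ lifting a chosen $F^{(n)}$, possible by \cref{thm:main} — yields that $\pi_n \colon \kv^{(n+1)} \to \kv^{(n)}$ is surjective; the top map $\kv \to \kv^{(n)}$ is surjective for the same reason, using $\kv \cong \krv$ via a global solution. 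Finally I would note the squares in the displayed diagram that do \emph{not} pass through $\Psi$ — namely the plain naturality squares of $\pi_n$ acting on the bi-torsor $\solkv$ — commute on the nose because the left and right actions are by (inverse) multiplication in $\TAut$ and $\pi_n$ is multiplicative.

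The main obstacle I anticipate is not any single hard estimate but getting the variance conventions exactly right: the left $\kv$-action is by right multiplication by the inverse and the right $\krv$-action is by left multiplication by the inverse, so the naive map $G \mapsto F^{(n)}G^{-1}(F^{(n)})^{-1}$ is an anti-isomorphism, and one must either insert an inversion or re-read the torsor conventions so that $\Psi_{F^{(n)}}$ comes out covariant and compatible with $\pi_n$ simultaneously. Once that is pinned down, every step is a direct translation of the complete-case statement after \cite[Prop.~8]{AlekseevEnriquezTorossian10}, with \cref{prop: torosor up to n}, \cref{lem:reduction2} and \cref{thm:main} supplying the finite-degree inputs.
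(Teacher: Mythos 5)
Your proposal is correct and follows essentially the same route as the paper: the isomorphism $\Psi_{F^{(n)}}$ comes from the bi-torsor structure of $\solkv^{(n)}$ established in \cref{prop: torosor up to n} (your variance issue disappears if you define $H$ by $G \cdot F^{(n)} = F^{(n)} \cdot H$, which gives conjugation $H = F^{(n)} G (F^{(n)})^{-1}$ and hence an honest homomorphism), and the surjectivity of the $\kv$-column is transported from \cref{lem:reduction2} through the commuting square of $\Psi$'s exactly as in the paper, with the $\solkv$-column given by \cref{thm:main}. One small correction: $\solkv \to \solkv^{(n)}$ is not surjective ``by definition'' --- the set $\pi_n(\solkv)$ is denoted $\solkv_{\leq n}$ and is a priori only \emph{contained} in $\solkv^{(n)}$; its surjectivity is itself a consequence of iterating \cref{thm:main} and passing to the inverse limit.
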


\begin{proof}
In \cref{prop: torosor up to n}, we show that $\krv^{(n)}$ acts freely and transitively on $\solkv^{(n)}$ on the right. 
A similar argument shows that $\kv^{(n)}$ acts freely and transitively on $\solkv^{(n)}$ on the left, and it is straightforward to check that these actions commute. The first statement is a general consequence of this bi-torsor structure of $\solkv^{(n)}$.

The vertical arrows in the diagram are the maps induced by the truncation maps $\TAut_{\leq n+1}\rightarrow \TAut_{\leq n}$. \cref{lem:reduction2,lem:krv-surj} established that the maps $\krv^{(n+1)} \to \krv^{(n)}$ are surjective group homomorphisms. Given a choice of $F\in \solkv$, and $F^{(n)}\eqdef\pi_n(F)$, there is a commutative diagram with horizontal isomorphisms
    \begin{center}
    \begin{tikzcd}
    \kv^{(n+1)} \arrow[d, two heads] \arrow[r, "\Psi_{F^{(n+1)}}"] & \krv^{(n+1)} \arrow[d, two heads]\\
    \kv^{(n)} \arrow[r, "\Psi_{F^{(n)}}"]  & \krv^{(n)} 
    \end{tikzcd}
    \end{center}
from which we deduce that the group homomorphisms $\kv^{(n+1)} \to \kv^{(n)}$ are also surjective.
The surjectivity of the maps $\solkv^{(n+1)} \to \solkv^{(n)}$ is the main theorem from the previous Section (\cref{thm:main}).
\end{proof}

\begin{cor}
The group $\kv$ is a pro-unipotent group. 
\end{cor}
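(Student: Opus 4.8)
The plan is to deduce this immediately from the structure already assembled. Recall that \cref{thm:towers} provides, for each $n\geq 1$, an isomorphism of groups $\Psi_{F^{(n)}}: \kv^{(n)} \xrightarrow{\cong} \krv^{(n)}$ (induced by choosing any fixed KV solution $F\in\solkv$ and truncating), and moreover the square
\begin{center}
\begin{tikzcd}
\kv^{(n+1)} \arrow[d, two heads] \arrow[r, "\Psi_{F^{(n+1)}}"] & \krv^{(n+1)} \arrow[d, two heads]\\
\kv^{(n)} \arrow[r, "\Psi_{F^{(n)}}"]  & \krv^{(n)}
\end{tikzcd}
\end{center}
commutes. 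First I would note that $\kv = \varprojlim_n \kv^{(n)}$: this follows because $\kv$ is a subgroup of $\TAut$, which is the inverse limit of the tower $\TAut_{\leq n}$ (\cref{rmk:towers}), and $\kv^{(n)} = \pi_n(\kv)$ by definition, so an element of $\TAut = \varprojlim \TAut_{\leq n}$ lies in $\kv$ precisely when each of its truncations lies in $\kv^{(n)}$ — here one uses that the defining equations \eqref{eq:KV1},\eqref{eq:KV2} of $\kv$ only involve finitely many degrees at a time, so a compatible system of solutions-up-to-degree-$n$ assembles to an honest element of $\kv$. The analogous statement $\krv = \varprojlim_n \krv^{(n)}$ holds for the same reason (and is effectively already used via $\krv^{(n)}\cong \krv/\calF_{n+1}(\krv)$).

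Next I would invoke \cref{l:krv-unipotent}, which states that each $\krv^{(n)}$ is a unipotent affine algebraic group, together with \cref{lem:reduction2}, which states that the truncation maps $\krv^{(n+1)}\to\krv^{(n)}$ are surjective. Transporting along the isomorphisms $\Psi_{F^{(n)}}$ and the commuting square above, I conclude that each $\kv^{(n)}$ is a unipotent affine algebraic group and that the truncation maps $\kv^{(n+1)}\to\kv^{(n)}$ are surjective group homomorphisms (the latter is also recorded directly in \cref{thm:towers}). Therefore $\kv = \varprojlim_n \kv^{(n)}$ exhibits $\kv$ as an inverse limit of unipotent algebraic groups along surjections, which is exactly the definition of a pro-unipotent group.

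I do not expect a genuine obstacle here; this is a formal consequence of the machinery already in place. The only point requiring a word of care is the identification $\kv = \varprojlim \kv^{(n)}$ — specifically, checking that a compatible family $(F^{(n)})_n$ with $F^{(n)}\in\kv^{(n)}$ genuinely assembles to an element of $\kv$ rather than merely of $\TAut$. This is where one uses that each KV equation, read in degree $k$, is a closed condition depending only on degrees $\leq k$, so that satisfaction up to every degree forces satisfaction in $\LL$ and in $\cyc$; the Duflo-type uniqueness of $r$ at each stage guarantees the truncated power series $r^{(n)}$ are compatible and hence glue to a single $r\in z^2\C[[z]]$. With that in hand the argument is complete.
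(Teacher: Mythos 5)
Your proposal is correct and follows essentially the same route as the paper: identify $\kv=\varprojlim_n\kv^{(n)}$ via the tower and transfer unipotency of $\krv^{(n)}$ across the isomorphism $\Psi_{F^{(n)}}$ from \cref{thm:towers}. The only presentational difference is that the paper makes the transfer concrete by composing $\Psi_{F^{(n)}}^{-1}$ with the faithful representation $\left(\Theta^{-1}\right)^{(n)}:\krv^{(n)}\to\Aut_v(\calA_{\leq n})$ to obtain a faithful unipotent representation of $\kv^{(n)}$ directly, which also settles the (mild) point that $\Psi_{F^{(n)}}$ respects the algebraic-group structure rather than merely the abstract group structure.
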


\begin{proof}
According to \cref{thm:towers} above, the choice of a degree $n$ KV solution $F^{(n)}\in\solkv^{(n)}$ induces an isomorphism $\Psi_{F^{(n)}}:\krv^{(n)}\rightarrow \kv^{(n)}$. Composing the inverse of this isomorphism with the faithful representations $\left(\Theta^{-1}\right)^{(n)}:\krv^{(n)}\to \Aut_v(\calA_{\leq n})$ from Lemma~\ref{l:krv-unipotent} (based on \cite[Thm.~5.12]{DancsoHalachevaRobertson23}) one obtains a faithful representation $\kv^{(n)}\to \Aut_v(\calA_{\leq n})$. The argument that $\kv^{(n)}$ is unipotent now follows exactly as in Lemma~\ref{l:krv-unipotent}. By \cref{thm:towers}, we have $\displaystyle{\kv=\varprojlim_n \kv^{(n)}}$, and thus $\kv$ is pro-unipotent.  
\end{proof}

The following Corollary expands on the comment below \cite[Prop.~8]{AlekseevEnriquezTorossian10}.

\begin{cor}
\label{thm:krv-lie-of-kv}
There is a Lie algebra isomorphism 
\[ \gr(\kv) \cong \lkrv \]
between the associated graded Lie algebra of the Kashiwara--Vergne group $\kv$, and the Kashiwara--Vergne Lie algebra $\lkrv$.
\end{cor}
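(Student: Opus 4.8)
The plan is to reduce \cref{thm:krv-lie-of-kv} to the already-established \cref{thm:krv-lie-of-KRV}, using that an up-to-degree $n$ KV solution intertwines the two towers. First I would observe that \cref{lem:commutator} applies verbatim to $\kv$, since $\calF_n(\kv) = \kv \cap \calF_n(\TAut)$: restricting the inclusion $(\calF_m(\TAut),\calF_n(\TAut)) \subseteq \calF_{m+n}(\TAut)$ and using that $\kv$ is a subgroup closed under commutators shows $(\calF_m(\kv),\calF_n(\kv)) \subseteq \calF_{m+n}(\kv)$. Hence $\gr(\kv)$ carries a well-defined Lie bracket, and the same is true of $\gr(\krv)$ by the discussion preceding \cref{thm:krv-lie-of-KRV}.

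Next I would fix, once and for all, a choice of KV solution $F \in \solkv$ (existence by \cite{AlekseevMeinrenken06} or \cite{AlekseevTorossian12}), and for each $n$ let $F^{(n)} = \pi_n(F)$. By the bi-torsor structure, conjugation by $F$ gives the isomorphism $\Psi_F \colon \krv \to \kv$, $G \mapsto F G^{-1} F^{-1}$ (or the appropriate composite dictated by the left/right actions in the excerpt); the key point is that $\Psi_F$ is conjugation by an element of $\TAut$, hence it preserves the degree filtration $\calF_\bullet(\TAut)$ and therefore restricts to isomorphisms $\calF_n(\krv) \xrightarrow{\sim} \calF_n(\kv)$ for every $n$. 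Passing to subquotients, $\Psi_F$ induces a graded vector space isomorphism $\gr(\krv) \xrightarrow{\sim} \gr(\kv)$. It remains to check this is a map of Lie algebras: since $\Psi_F$ is a group isomorphism it sends group commutators to group commutators, and the brackets on $\gr(\krv)$ and $\gr(\kv)$ are defined from these commutators in exactly the same way, so compatibility is automatic. Composing with the isomorphism $\gr(\krv) \cong \lkrv$ of \cref{thm:krv-lie-of-KRV} gives the desired $\gr(\kv) \cong \lkrv$.

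I would then remark that although the resulting isomorphism is constructed from a choice of $F$, it is in fact canonical: two choices $F, F'$ differ by right multiplication by an element of $\krv$, and since $\krv = \exp(\lkrv)$ is pro-unipotent with filtration $\calF_\bullet(\krv)$, the induced automorphism of $\gr(\krv)$ is the identity (the degree $n$ part of the difference lives in $\calF_{n+1}$). Alternatively, one can bypass the choice of $F$ entirely: \cref{thm:towers} already supplies, for each $n$, an isomorphism $\Psi_{F^{(n)}} \colon \kv^{(n)} \to \krv^{(n)}$ compatible with the tower maps, and since $\kv^{(n)} \cong \kv/\calF_{n+1}(\kv)$ and $\krv^{(n)} \cong \krv/\calF_{n+1}(\krv)$, taking associated gradeds of the tower gives $\gr_n(\kv) \cong \gr_n(\krv)$ directly.

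The only genuine subtlety — and the step I expect to require the most care — is verifying that the degree filtration on $\kv$ behaves as well as the one on $\krv$, specifically that $\calF_n(\kv) = \ker(\kv \to \kv^{(n-1)})$ and that the tower maps $\kv^{(n+1)} \to \kv^{(n)}$ are surjective with kernel $\calF_{n+1}(\kv)/\calF_{n+2}(\kv)$ identified correctly; but this surjectivity is exactly what \cref{thm:towers} provides via the intertwining isomorphisms $\Psi_{F^{(n)}}$ and the known surjectivity of the $\krv$-tower (\cref{lem:reduction2}). Everything else is a formal consequence of \cref{lem:commutator}, \cref{thm:krv-lie-of-KRV}, and the bi-torsor structure, so the proof is short.
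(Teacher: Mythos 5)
Your argument is correct and takes essentially the same route as the paper: both transfer \cref{thm:krv-lie-of-KRV} to $\kv$ via the bi-torsor isomorphisms $\Psi_{F^{(n)}}$ of \cref{thm:towers} (your single global conjugation by $F$ is just the inverse limit of these), with bracket compatibility coming from \cref{lem:commutator} and the fact that both filtrations are restricted from $\calF_\bullet(\TAut)$. The one nitpick is the formula: solving $FH^{-1}=G^{-1}F$ gives plain conjugation $H\mapsto FHF^{-1}$, whereas your tentative $G\mapsto FG^{-1}F^{-1}$ is an anti-homomorphism --- but as you anticipated, the correct composite is dictated by the left/right action conventions, and the property you actually use (preservation of the degree filtration under conjugation by an element of $\TAut$) holds regardless.
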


\begin{proof}
    \cref{thm:towers} shows that the graded pieces of $\krv$ and $\kv$ are isomorphic. 
    Thus, we have $\gr(\kv) \cong \lkrv$ as vector spaces.
    Since the bracket on $\gr(\kv)$ is induced by the one on $\TAut$, it follows as in \cref{thm:krv-lie-of-KRV} that the isomorphism above is in fact a Lie algebra isomorphism. 
\end{proof}

\begin{remark}
\label{rem:Vassiliev-filtration}
While outside the scope of this paper, we note that the identification of $\kv$ with automorphisms of the completed tensor category of $w$-foams in \cite{DancsoHalachevaRobertson23} aligns the degree filtration of $\kv$ with the Vassiliev filtration of a class of 4-dimensional knotted objects. Namely, $\kv$ is shown to be isomorphic to the group of skeleton- and expansion-preserving automorphisms of the completed circuit algebra of {\em w-foams} $\widehat{wF}$, which locally represent ``ribbon embeddings'' of tubes in $\R^4$ with singular vertices. The Vassiliev filtration of these knotted objects coincides with the filtration of the circuit algebra by powers of the augmentation ideal. The descending filtration of $\kv$ is compatible via the isomorphism $\kv \cong \Aut_v(\widehat{wF})$ with the filtration presented in \cite[Def.~2.25]{DancsoHalachevaRobertson23}.
\end{remark}


\bibliography{kv}

\end{document}